\documentclass[11pt]{amsart}
\usepackage[T1]{fontenc}
\usepackage{lmodern}
\usepackage[margin=1in]{geometry}
\usepackage{amsmath,amssymb,amsthm,mathtools,booktabs,longtable,array}
\usepackage{microtype,needspace,xurl,xcolor}
\usepackage[font=small,labelfont=sc,labelsep=period]{caption}
\usepackage{hyperref}
\hypersetup{colorlinks=true,linkcolor=blue!45!black,citecolor=blue!45!black,urlcolor=blue!45!black,pdftitle={CM fibers of Shimura parametrizations and thirteen proofs of the class number one theorem},pdfauthor={Jingxuan Geng}}
\newtheorem{theorem}{Theorem}[section]
\newtheorem{proposition}[theorem]{Proposition}
\newtheorem{corollary}[theorem]{Corollary}
\newtheorem{conjecture}[theorem]{Conjecture}
\newtheorem{lemma}[theorem]{Lemma}
\theoremstyle{definition}
\newtheorem{definition}[theorem]{Definition}
\newtheorem{assumption}[theorem]{Assumption}
\newcommand{\Q}{\mathbf Q}
\newcommand{\Z}{\mathbf Z}
\newcommand{\R}{\mathbf R}

\newcommand{\C}{\mathbf C}
\newcommand{\F}{\mathbf F}
\newcommand{\OO}{\mathcal O}
\newcommand{\CM}{\operatorname{CM}}
\newcommand{\Gal}{\operatorname{Gal}}
\newcommand{\Fix}{\operatorname{Fix}}
\newcommand{\Pic}{\operatorname{Pic}}
\newcommand{\tors}{\mathrm{tors}}
\newcommand{\labelE}[1]{\texttt{#1}}
\DeclareMathOperator{\GL}{GL}
\numberwithin{equation}{section}
\title[CM fibers and class number one]{CM fibers of Shimura parametrizations\newline and thirteen proofs of the class number one theorem}
\author{Jingxuan Geng}
\date{October 5, 2026}
\subjclass[2020]{11G18, 11G05, 11R29, 11F67}
\keywords{Shimura curves, complex multiplication, class number one, elliptic curves}
\begin{document}
\begin{abstract}
We determine thirteen entirely CM identity fibers of Shimura parametrizations attached to elliptic curves of rank two and conductor below $2000$. Each yields a proof of the imaginary quadratic class number one theorem. The argument combines the Gross--Zagier formula with the Ribet--Takahashi degree comparison to show that the fiber is completely explained by known CM points.
\end{abstract}
\maketitle
\tableofcontents

\section{Introduction}\label{sec:introduction}

Gauss's class number one problem asks which imaginary quadratic fields have unique factorization in their rings of integers. The answer, in terms of fundamental discriminants, is
\[
 \{-3,-4,-7,-8,-11,-19,-43,-67,-163\}.
\]
Heegner's modular-function argument appeared in 1952~\cite{Heegner1952}. Baker proved the classification using linear forms in logarithms~\cite{Baker1966,Baker1969}, and Stark gave an independent proof and subsequently justified the disputed step in Heegner's argument~\cite{Stark1967,Stark1969}.

Serre explained the geometric principle underlying modular-curve proofs: complex multiplication associates a class number one field with an integral point on a suitable modular curve~\cite[Appendix]{Serre1997}. Determining those points excludes further fields. Later examples include Schoof--Tzanakis at level~11~\cite{SchoofTzanakis2012} and the quadratic Chabauty computation on the nonsplit Cartan curve of level~17 by Balakrishnan, Dogra, M\"uller, Tuitman, and Vonk~\cite{BDMTV2023}.

Goldfeld connected the problem with central vanishing of elliptic-curve $L$-functions~\cite{Goldfeld1976}. The Gross--Zagier formula relates a central derivative to a Heegner-point height~\cite{GrossZagier1986}; together with a suitable analytic-rank-three example, it yields Goldfeld's effective bound $h(D)\gg_{\varepsilon}(\log |D|)^{1-\varepsilon}$ for every $\varepsilon>0$, making class-number problems effectively solvable~\cite{BuhlerGrossZagier1985,Oesterle1985}. More recently, OpenAI's uniform exclusion of Landau--Siegel zeros~\cite{OpenAI2026}, with explicit constants supplied by Banks~\cite{Banks2026}, yields via the analytic class number formula the effective bound $h(D)\gg\sqrt{|D|}/\log |D|$ for negative fundamental discriminants $D$.

In this paper, we give a new geometric approach to class number one theorem by studying the identity fiber of a normalized Shimura parametrization $\varphi_E:X^{pq}\to E$ for an elliptic curve of rank at least two.  Gross--Zagier forces every admissible class-number-one CM point into this fiber. If known CM points exhaust it, Oesterl\'e's small-prime estimate bounds every possible class number one discriminant, thus finishing the class number one problem. We prove exhaustion by matching the CM contribution, with ramification, to the parametrization degree. 

Our approach is close in spirit to the work of Caeiro and Darmon~\cite{CaeiroDarmon2025,CaeiroDarmonUnpublished}. They conditionally solve class-number-one problems in certain real quadratic families by forcing Stark--Heegner parameters to be zeros of explicit rigid analytic functions and analyzing those zeros. In joint work with Caeiro~\cite{CaeiroGengUnpublished}, we use $p$-adic uniformization to study the fiber of Shimura parametrization and its higher analogue, giving another approach to class number problems.

\begin{samepage}
\begin{theorem}\label{thm:fibers}
For each of the elliptic curves with LMFDB labels
\begin{gather*}
\labelE{446.a1},\ \labelE{681.a1},\ \labelE{707.a1},\ \labelE{718.a1},\ \labelE{817.a1},\
\labelE{1077.a1},\ \labelE{1126.a1},\ \labelE{1137.a1},\\
\labelE{1466.b1},\ \labelE{1477.a1},\ \labelE{1594.a1},\ \labelE{1642.a1},\ \labelE{1909.a1},
\end{gather*}
there is a parametrization $\varphi_E:X^{N_E}\to E$ over $\Q$ whose identity fiber is entirely accounted for by known CM points, counted with their ramification multiplicities, as recorded in the following table.
\end{theorem}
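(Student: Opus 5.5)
The plan is to prove the statement curve by curve, running the same three-step comparison for each of the thirteen curves in the list.

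\textbf{Step 1: set-up.} For $E$ of conductor $N_E=pq$ with $p\ne q$ prime, I fix the indefinite quaternion algebra $B$ of discriminant $pq$. Let $X^{pq}$ be the Shimura curve attached to a maximal order of $B$. The Jacquet--Langlands correspondence (Ribet) gives a nonconstant map $\varphi_E:X^{pq}\to E$ over $\Q$. Since $X^{pq}$ has no cusps, this map must be normalized by choosing a base divisor; I take the Hodge class or a fixed CM point, so that the identity fiber is a well-defined effective divisor of degree $\deg\varphi_E$. The Ribet--Takahashi degree comparison computes this degree:
\[
\deg\varphi_E=\frac{\deg\varphi_{X_0(N_E)}}{c_p\,c_q}
\]
up to Manin-constant and isogeny factors, where $c_\ell$ are Tamagawa-type component group orders. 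For each listed curve I would compute $\deg\varphi_E$ exactly from the modular degree on LMFDB and the local data at $p$ and $q$.

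\textbf{Step 2: CM points in the fiber.} For each imaginary quadratic order $\OO_D$ in which both $p$ and $q$ are non-split (the Heegner condition for $X^{pq}$), I form the Heegner divisor $P_D=\sum\varphi_E(x)$ over the CM points $x$ of discriminant $D$. When $h(D)=1$ and the sign condition holds, $P_D\in E(\Q)$ or $E(K)^{-}$, and Gross--Zagier gives $\hat h(P_D)\doteq L'(E/K,1)$. Because $E$ has rank two, its root number is $+1$. For the relevant $K$ the twisted root number forces $L(E,1)\,L(E^D,1)$, hence $L'(E/K,1)$, to vanish, so $P_D$ is torsion. After clearing the finite torsion, via the normalization and a check that $E(\Q)_{\tors}$ is trivial or explicitly handled, $\varphi_E(x)=0$. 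The same mechanism applies to the known class-number-one discriminants and to small known discriminants with $h>1$ whose Galois orbits have trace zero. This produces an explicit list of known CM points lying in $\varphi_E^{-1}(0)$.

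\textbf{Step 3: counting with ramification, the main obstacle.} I sum the contributions from Step 2, each weighted by its ramification index. The index is at least $e_x$ at elliptic points of order $2$ or $3$ (discriminants $-4$, $-3$, and orders at these), since $\varphi_E$ factors through the orbifold. It is also larger wherever $\varphi_E$ is forced to ramify, for example at fixed points of Atkin--Lehner involutions $w_p,w_q$ acting by the negatives of the eigenvalues of $E$. In that case $\varphi_E\circ w=-\varphi_E+c$, so the derivative argument gives even multiplicity. The aim is to show that this weighted sum equals $\deg\varphi_E$ exactly. I expect the hard part to be determining these multiplicities rigorously, that is, proving that the local ramification is exactly the predicted value and not larger. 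I would handle this by computing lower bounds from the symmetries and then noting that any excess would make the weighted sum exceed $\deg\varphi_E$. Equality of the lower-bound total with the degree then pins down every multiplicity at once and shows that no other point lies in the fiber. The remaining risk is purely numerical: for each curve the table must balance.
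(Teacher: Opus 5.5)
Your overall architecture matches the paper's: a Ribet--Takahashi degree, Gross--Zagier to put CM points in the fiber, then a ramification-weighted count matched to $\deg\varphi_E$. The gap is in Step~3, the step you flag as the crux: it gets forced ramification backwards, so the count cannot close. Suppose $\varphi_E\circ W_m=[-1]\circ\varphi_E$ and $x\in\Fix(W_m)$ lies over $O$. Take local coordinates in which $W_m$ and $[-1]$ both act by negation. Then the local expansion of $\varphi_E$ is odd, so $e_x$ is odd, generically $1$; this is exactly the parity fact the paper uses in the exceptional case of Corollary~\ref{cor:fixed-zero}. Forced ramification instead comes from $\varepsilon_m=+1$: then $\varphi_E$ factors through $X^N/\langle W_m\rangle$, the expansion is even, and $e_x\ge2$. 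Elliptic points force nothing either, since $\varphi_E$ is a map from the coarse curve $X^N$, whose local coordinate there is already $z^{e}$. With your lower bounds the total overshoots the degree: doubling the $32$ points of $\CM(-1784)$ on \labelE{446.a1} already gives $12+64=76>44$. Meanwhile you would never produce the starred entries $(-43,2)^*$ on \labelE{817.a1} and $(-8,2)^*$ on \labelE{1466.b1}, \labelE{1594.a1}. These come from the positive involutions $W_{43}$ and $W_2$; without them \labelE{817.a1} sums to $26\ne28$. (In Step~1 the correction factor is $t_\ell=v_\ell(\Delta_E)$, not a Tamagawa number, and the singleton isogeny class removes the fudge factors.)

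Step~2 also fails to put the $h(D)>1$ points, which make up most of each fiber, into $\varphi_E^{-1}(O)$. A vanishing Galois trace kills only the trivial-character component. Pointwise vanishing needs every $P_\chi(x)=0$ together with $E(H_D)_{\tors}=0$. Moreover $E(\Q)_{\tors}$ is the wrong group: even class-number-one images are only shown torsion in $E(K_0K_D)$. The paper handles this with Lemmas~\ref{lem:biquadratic-torsion-free} and~\ref{lem:ringtorsion}, via irreducibility of $E[\ell]$ and the metabelian structure of $\Gal(H_D/\Q)$.

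The paper's mechanisms for these points, none of which appear in your plan, are:
\begin{itemize}
\item For $\varepsilon_m=-1$, equivariance gives $\varphi_E(\Fix(W_m))\subseteq E[2]$, which Lemma~\ref{lem:ringtorsion} kills. The exception is $(\labelE{1594.a1},-3188)$, where $\Q(\sqrt D)=\Q(\sqrt{\Delta_E})$ and a parity argument on $X^N/\langle W_2\rangle$ is needed.
\item Gross's ramified vanishing: $\chi([\mathfrak r])\ne a_r$ implies $P_\chi=0$.
\item Twisted Gross--Zagier--Zhang for quadratic $\chi$ with a genus factor $L(E^{d_2},s)$ of analytic rank $\ge2$.
\item Norm compatibility for $D=-60$ on \labelE{707.a1}, where $a_2=-2$.
\end{itemize}

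All of this requires the base point to be a class-number-one $\tau_0$ with $N$ inert modulo $d_0$. That choice makes $\varphi_E$ defined over $\Q$ and makes the constant $c$ in $\varphi_E\circ W_m=-\varphi_E+c$ vanish. The Hodge class is not a point, and an arbitrary CM base point gives neither property.

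Finally, the untwisted formula needs $a_\ell=1$ at ramified $\ell\mid N$. Class-number-one points with $a_\ell=-1$, such as $D=-7,-28$ on \labelE{707.a1}, need the separate argument $W_\ell\tau=\tau\Rightarrow2\varphi_E(\tau)=O$. Also, $L'(E/K,1)=0$ comes from the double zero of $L(E,s)$, not from $L(E/K,1)=0$.
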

\end{samepage}
\begingroup
\small
\setlength{\tabcolsep}{3pt}
\renewcommand{\arraystretch}{1.12}

\begin{longtable}{@{}l
  >{\raggedright\arraybackslash}p{0.30\textwidth}
  >{\raggedright\arraybackslash}p{0.22\textwidth}
  >{\raggedright\arraybackslash}p{0.25\textwidth}
  r@{}}

\caption{Complete CM fibers of the above elliptic curves. 
An entry $(D,n)$ denotes the $n$ points of discriminant $D$ on $X^{N_E}$.
A star indicates ramification index two.}
\label{tab:complete-fibers}\\
\toprule
$E$
& Class number one points
& Atkin--Lehner fixed points\newline ($h(D)>1$)
& Additional CM points\newline ($h(D)>1$)
& $\delta_E$\\
\midrule
\endfirsthead

\toprule
$E$
& Class number one points
& Atkin--Lehner fixed points\newline ($h(D)>1$)
& Additional CM points\newline ($h(D)>1$)
& $\delta_E$\\
\midrule
\endhead

\bottomrule
\endfoot

\texttt{446.a1}
& $(-4,2)$, $(-8,2)$, $(-19,4)$, $(-43,4)$
& $(-1784,32)$
& ---
& 44\\[4pt]

\texttt{681.a1}
& $(-3,2)$, $(-4,4)$, $(-7,4)$, $(-12,2)$,
  $(-16,4)$, $(-19,4)$, $(-28,4)$, $(-43,4)$
& $(-2724,20)$
& ---
& 48\\[4pt]

\texttt{707.a1}
& $(-7,2)$, $(-8,4)$, $(-11,4)$, $(-28,2)$,
  $(-67,4)$, $(-163,4)$
& $(-707,6)$, $(-2828,18)$
& $(-60,8)$
& 52\\[4pt]

\texttt{718.a1}
& $(-3,4)$, $(-4,2)$, $(-8,2)$, $(-11,4)$, $(-27,4)$
& $(-2872,12)$
& ---
& 28\\[4pt]

\texttt{817.a1}
& $(-4,4)$, $(-11,4)$, $(-16,4)$, $(-43,2)^{*}$
& $(-3268,12)$
& ---
& 28\\[4pt]

\texttt{1077.a1}
& $(-3,2)$, $(-4,4)$, $(-12,2)$, $(-16,4)$
& $(-4308,24)$
& ---
& 36\\[4pt]

\texttt{1126.a1}
& $(-3,4)$, $(-4,2)$, $(-11,4)$, $(-19,4)$,
  $(-27,4)$, $(-67,4)$
& $(-4504,22)$
& ---
& 44\\[4pt]

\texttt{1137.a1}
& $(-4,4)$, $(-16,4)$, $(-19,4)$, $(-67,4)$
& $(-4548,20)$
& ---
& 36\\[4pt]

\texttt{1466.b1}
& $(-8,2)^{*}$, $(-11,4)$, $(-43,4)$, $(-67,4)$
& $(-2932,14)$, $(-5864,58)$
& $(-20,4)$, $(-24,4)$, $(-52,4)$, $(-72,4)$
& 104\\[4pt]

\texttt{1477.a1}
& $(-4,4)$, $(-11,4)$, $(-16,4)$, $(-43,4)$, $(-163,4)$
& $(-5908,16)$
& $(-36,8)$, $(-148,8)$
& 52\\[4pt]

\texttt{1594.a1}
& $(-3,4)$, $(-8,2)^{*}$, $(-19,4)$, $(-27,4)$
& $(-3188,30)$, $(-6376,34)$
& $(-20,4)$, $(-72,4)$, $(-88,4)$, $(-148,4)$
& 96\\[4pt]

\texttt{1642.a1}
& $(-3,4)$, $(-8,2)$, $(-11,4)$, $(-27,4)$
& $(-6568,14)$
& ---
& 28\\[4pt]

\texttt{1909.a1}
& $(-3,4)$, $(-4,4)$, $(-12,4)$, $(-16,4)$, $(-27,4)$
& $(-7636,28)$
& $(-75,8)$, $(-100,8)$, $(-667,8)$
& 72\\

\end{longtable}
\endgroup

\begin{corollary}\label{cor:classone}
Each curve in Theorem~\ref{thm:fibers} gives a new proof of the imaginary quadratic class number one theorem.
\end{corollary}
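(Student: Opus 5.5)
The plan is to derive the corollary from Theorem~\ref{thm:fibers} by showing that every class number one discriminant which is not in the known list would have to produce an additional point in the identity fiber $\varphi_E^{-1}(0)$. Since the table shows that the fiber contains no room for such a point, this rules out any new field.

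Fix $E$ from the list. Write $N_E=pq$ for the discriminant of the quaternion algebra, with $\varphi_E: X^{pq}\to E$ the parametrization. Suppose $D<0$ is a fundamental discriminant with $h(D)=1$ that is not in Gauss's list. First I would use Oesterl\'e's small-prime estimate (the effective Goldfeld bound via a rank-three curve, as in the introduction) to reduce to the case where $|D|$ is large enough that $p$ and $q$ are both inert in $K=\Q(\sqrt D)$. Concretely, for $h(D)=1$ every prime $\ell<|D|/4$ is inert in $K$. So once $|D|>4\max(p,q)$, both primes are inert, and $K$ embeds in the quaternion algebra of discriminant $pq$.

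Then $X^{pq}$ carries CM points of discriminant $D$. Since $h(D)=1$, these points are defined over $\Q$ up to the Atkin--Lehner action, more precisely over $K$ with Galois conjugates permuted by $w_{pq}$. Take the trace $P_D$ of such a point in $E(K)$. The sign of the functional equation of $L(E/K,s)$ is $-1$, because $E$ has analytic rank two over $\Q$ and the twist condition at $p$ and $q$ is satisfied. So Gross--Zagier expresses $\hat h(P_D)$ as a nonzero multiple of $L'(E/K,1)=L'(E,1)L(E^D,1)$. Since $\operatorname{ord}_{s=1}L(E,s)\ge 2$, we get $L'(E,1)=0$, so $P_D$ is torsion.

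Next I would use a torsion computation together with the fact that the class number one point is an Atkin--Lehner eigenpoint (as in Serre's integral-point principle) to upgrade "torsion" to "equal to the identity". This places the CM point, counted with its multiplicity, inside $\varphi_E^{-1}(0)$. The fiber has total degree $\delta_E=\deg\varphi_E$. By Theorem~\ref{thm:fibers}, the listed points already account for all $\delta_E$ of it, counted with ramification. So any new point would exceed the degree, which is a contradiction. For the small $|D|\le 4\max(p,q)$ excluded at the start, a finite check settles those cases directly.

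The main obstacle is the step from "torsion" to "identity". This needs control of $E(K)_\tors$ and of how $w_{pq}$ and complex conjugation act on $\varphi_E$, to ensure the image is exactly $0$ rather than a nonzero torsion point. I would first try to bound $E(K)_\tors$ by reduction at small split primes. Then I would use the equivariance $\varphi_E\circ w=\pm\varphi_E+c$ to force the image to vanish, and I would verify this curve by curve for the thirteen examples.
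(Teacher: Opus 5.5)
Your skeleton matches the paper's argument at the end of Section~\ref{sec:setup}. For a hypothetical class-number-one $D$ with $|D|>4q$, the inertness lemma makes $N$ of inert type, so $\CM_N(D)$ has four points. Gross--Zagier makes their images torsion, one upgrades this to $O$, and the new points then contradict the degree $\delta_E$ already exhausted in Theorem~\ref{thm:fibers}; the range $|D|\le 4q$ is a finite check.

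\textbf{The gap.} The gap is exactly the step you flag, and neither tool you propose closes it.
\begin{enumerate}
\item Reduction at small split primes is unavailable. For such a $D$, every prime $\ell<|D|/4$ is inert in $K$ (the very fact you used in your first step), so $K$ has no small split primes. Since $D$ is unknown, a curve-by-curve computation cannot replace a bound that is uniform in $D$. The only reductions available uniformly in $D$ are at small primes inert in $K$, with residue field $\F_{\ell^2}$, and that is not what you propose.
\item The Atkin--Lehner route also fails. Because $\gcd(D,N)=1$, the group $W_N(D)=W_N$ acts simply transitively on the four points of $\CM_N(D)$. So none of them is fixed by any $W_m$, and they are not ``Atkin--Lehner eigenpoints.'' The equivariance $\varphi_E\circ W_m=[\varepsilon_m]\circ\varphi_E$ only relates the torsion images of different points in one orbit; it cannot force a nonzero torsion value to vanish. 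The paper uses fixed points only for small, known discriminants with $\gcd(D,N)\ne1$ and some $a_\ell=-1$.
\end{enumerate}

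\textbf{The missing ingredient.} What you need is Lemma~\ref{lem:biquadratic-torsion-free}, which is uniform in $D$. Since $N=pq$ is squarefree and the isogeny class is a singleton, $E$ has no rational $\ell$-isogeny. Hence each $E[\ell]$ is irreducible, indeed with image $\GL_2(\F_\ell)$. If $F$ is quadratic or biquadratic, a nonzero point of $E(F)[\ell]$ would, by normality of $F/\Q$ and irreducibility, give $E[\ell]\subset E(F)$. That would make $\GL_2(\F_\ell)$ a quotient of the abelian group $\Gal(F/\Q)$, which is impossible. So $E(K_0K)_{\tors}=0$ and the torsion image is $O$.

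\textbf{Smaller points.}
\begin{itemize}
\item You must fix the normalization. The fiber in Theorem~\ref{thm:fibers} is that of $\varphi_E(\tau)=\alpha([\tau]-[\tau_0])$ with $\tau_0\in\CM_N(d_0)$ and $h(d_0)=1$. It is Gross--Zagier applied to both $\tau$ and the base point $\tau_0$ that makes $\varphi_E(\tau)=\alpha([\tau]-\xi)-\alpha([\tau_0]-\xi)$ torsion, and this also removes your translation constant $c$.
\item Your first reduction needs only the elementary norm argument. If $\ell$ is split or ramified and $h(D)=1$, then $4\ell=a^2+|D|b^2$ with $b\ne0$, so $4\ell\ge|D|$. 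Goldfeld's effective bound is not needed.
\item The sign $-1$ of $L(E/K,s)$ comes from $p$ and $q$ being inert in $K$, not from the rank of $E$.
\end{itemize}
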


Section~\ref{sec:class-background} recalls the genus-theoretic restriction on class number one discriminants and Oesterl\'e's inertness lemma. Section~\ref{sec:setup} recalls Shimura curves, CM points, and the Gross--Zagier formula, and then explains why Theorem~\ref{thm:fibers} implies Corollary~\ref{cor:classone}. Section~\ref{sec:involutions} discusses the degrees of Shimura uniformizations, Atkin--Lehner fixed points, and forced ramification. Section~\ref{sec:characters} accounts for additional CM points through the vanishing of twists and completes the proof of Theorem~\ref{thm:fibers}. Finally, Section~\ref{sec:further} proposes a finiteness conjecture.

\section{Class number one and nonsplit type}\label{sec:class-background}

For a negative fundamental discriminant $D$, write $K_D=\Q(\sqrt D)$ and $h(D)=\#\Pic(\OO_{K_D})$. We recall two classical restrictions in the class number one case.

\begin{lemma}[Genus theory]\label{lem:genus}
If $D<0$ is fundamental and $h(D)=1$, then $D=-4$, $D=-8$, or $D=-p$ for a prime $p\equiv3\pmod4$.
\end{lemma}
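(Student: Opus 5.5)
The approach is Gauss's genus theory for the $2$-rank of the class group. For a negative fundamental discriminant $D$ with exactly $t$ distinct prime divisors, the $2$-torsion subgroup $\Pic(\OO_{K_D})[2]$ has order $2^{t-1}$. One way to see this is through the ambiguous ideal classes: the ramified prime ideals generate $\Pic[2]$. Their $t$ classes satisfy exactly one relation, which comes from the principal ideal $(\sqrt{D})$ or $(\sqrt{D/4})$. Since $D<0$, the unit group is finite, so no further relations arise. I will cite this standard count as classical rather than reprove it. If $h(D)=1$, then $\Pic[2]$ is trivial, so $2^{t-1}=1$ and hence $t=1$. Thus $D$ is divisible by exactly one prime.

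It remains to list the negative fundamental discriminants with a single prime divisor. Recall that a fundamental discriminant is either $D\equiv 1\pmod 4$ squarefree, or $D=4m$ with $m\equiv 2,3\pmod 4$ squarefree. I split into the odd and even cases.

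\emph{Odd $D$.} Here $D$ is squarefree with one prime factor, so $D=-p$ for an odd prime $p$. The condition $D\equiv 1\pmod 4$ then forces $p\equiv 3\pmod 4$.

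\emph{Even $D$.} Here $D=4m$, and the only prime dividing $D$ is $2$. Since $m$ is squarefree and negative, $m\in\{-1,-2\}$. Both values satisfy $m\equiv 2,3\pmod 4$, giving $D=-4$ and $D=-8$. Any odd prime dividing $m$ would make $t\ge 2$, so there are no other possibilities.

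Combining the two cases gives exactly the list in Lemma~\ref{lem:genus}. The only substantive input is the genus-theory count $\#\Pic[2]=2^{t-1}$. Its one delicate point is checking that the unique relation among the ramified classes really exists, namely that the principal ideal generated by $\sqrt{D}$ (resp.\ $\sqrt{D/4}$) factors as a product of ramified primes. Everything else is bookkeeping with congruences.
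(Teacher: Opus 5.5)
Your proposal is correct and follows the paper's argument: the paper likewise specializes the genus-theory count $2^{t-1}$ to $h(D)=1$ to force $t=1$. It phrases the count via the genus group $\Pic(\OO_{K_D})/\Pic(\OO_{K_D})^2$, which has the same order as your $\Pic(\OO_{K_D})[2]$, and leaves implicit the congruence bookkeeping you carry out. One small remark: this lemma only uses the inequality $\#\Pic(\OO_{K_D})[2]\ge 2^{t-1}$, so the point that actually matters is that there is at most one relation among the ramified classes (this is where $\OO_{K_D}^\times=\{\pm1\}$ for $|D|>4$ enters), not that the relation exists.
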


This is the immediate class-number-one specialization of the principal genus theorem: if $D$ has $t$ prime-discriminant factors, then $\#(\Pic(\OO_{K_D})/\Pic(\OO_{K_D})^2)=2^{t-1}$~\cite[Proposition~3.11 and Theorem~3.15]{Cox2013}.

\begin{definition}\label{def:types}
Let $N>1$ and let $D<0$ be fundamental. We say that $N$ is of \emph{nonsplit type modulo $D$} if every prime $\ell\mid N$ is inert or ramified in $K_D$, equivalently $(D/\ell)\in\{0,-1\}$. It is of \emph{inert type modulo $D$} if every such prime is inert, equivalently $(D/\ell)=-1$. Here $(D/\ell)$ is the Kronecker symbol.
\end{definition}

\begin{lemma}[Oesterl\'e's inertness lemma]\label{lem:inertness}
Let $D<0$ be fundamental with $h(D)=1$. If a prime $\ell$ satisfies $\ell<|D|/4$, then $\ell$ does not split in $K_D$. Thus, writing $P(N)=\max_{\ell\mid N}\ell$, the inequality $|D|>4P(N)$ implies that $N$ is of nonsplit type modulo $D$; it is of inert type if also $(D,N)=1$.
\end{lemma}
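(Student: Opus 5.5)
The idea is to get splitting from reducibility of the Hilbert class polynomial modulo $\ell$, via the classical reduction of singular moduli. Since $h(D)=1$, the $j$-invariant $j_D=j\bigl((D+\sqrt D)/2\bigr)$ is a rational integer. Suppose a prime $\ell$ splits in $K_D$, say $\ell=\mathfrak l\bar{\mathfrak l}$. Because $\OO_{K_D}$ is a PID, $\mathfrak l=(\alpha)$ with $\alpha\in\OO_{K_D}$ of norm $\ell$. The plan is to play the norm equation $N(\alpha)=\ell$ off against the size of $|D|$.

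Write $\alpha=(a+b\sqrt D)/2$ with $a,b\in\Z$ and $a\equiv bD \pmod 2$. Then
\[
4\ell = a^2 + b^2|D|.
\]
If $b=0$, then $\alpha=a/2\in\Q\cap\OO_{K_D}=\Z$, and its norm would be a perfect square, not the prime $\ell$. Hence $b\neq0$, and so $4\ell\ge |D|$, that is, $\ell\ge |D|/4$. This contradicts $\ell<|D|/4$. Therefore $\ell$ is inert or ramified. The argument is just the principal-form observation: the only reduced form of discriminant $D$ is $x^2+xy+\frac{1-D}{4}y^2$ (or $x^2-\frac D4y^2$), and its smallest nonzero value at $y\ne0$ is at least $|D|/4$.

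For the second sentence of the lemma, suppose $|D|>4P(N)$. Every prime $\ell\mid N$ satisfies $\ell\le P(N)<|D|/4$, so by the first part no such $\ell$ splits, and $N$ is of nonsplit type. If also $(D,N)=1$, no $\ell\mid N$ divides $D$, so none ramifies and each is inert. Thus $N$ is of inert type.

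The only delicate point is the parity convention when $D\equiv 0\pmod 4$, where $\OO_{K_D}=\Z[\sqrt{D/4}]$. Writing $\alpha=x+y\sqrt{D/4}$ gives $\ell=x^2+y^2|D|/4\ge|D|/4$ when $y\ne0$, the same bound. So there is no real obstacle; the proof is elementary and does not use Lemma~\ref{lem:genus}.
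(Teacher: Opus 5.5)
Your proof is correct. The norm equation $4\ell=a^2+b^2|D|$ with $b\neq0$ is the standard argument behind the paper's citation of Oesterl\'e (\S1.4(a)); the paper states the lemma without reproducing a proof, and your derivation of the nonsplit and inert-type consequences is the immediate one (your opening remark about $j_D\in\Z$ and Hilbert class polynomials is never used and can be deleted).
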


We use this standard small-prime estimate in the form recalled by Oesterl\'e~\cite[\S1.4(a)]{Oesterle1985}. For $N=pq$ with $p<q$, Lemma~\ref{lem:genus} shows that every hypothetical class number one discriminant satisfying $|D|>4q$ makes $N$ of inert type.

\section{Shimura parametrizations and the proof strategy}\label{sec:setup}

\subsection{The level-one Shimura curve}
Let \(B\) be an indefinite quaternion division algebra over \(\Q\) of reduced
discriminant \(N\), and fix a maximal order \(\OO_B\subset B\).  Thus \(N>1\)
is squarefree and is the product of an even number of primes.  We use only
the trivial-level curve attached to \(\OO_B\). 

Choose a splitting
\[
\iota_\infty:B\otimes_{\Q}\R\xrightarrow{\;\sim\;}M_2(\R)
\]
and put
\[
\Gamma^1
=
\iota_\infty(\OO_B^1)/\{\pm1\}
\subset\mathrm{PSL}_2(\R),
\qquad
\OO_B^1=\{\gamma\in\OO_B^\times:\operatorname{nrd}(\gamma)=1\}.
\]
The complex points of the level-one Shimura curve are
\[
X^N(\C)=\Gamma^1\backslash\mathfrak h,
\]
where \(\mathfrak h\) is the complex upper half-plane.  Since \(B\) is a
division algebra, this quotient is compact.  Shimura proved that this Riemann
surface has a canonical smooth projective model \(X^N/\Q\); see~\cite[\S\S43.1 and 43.8]{Voight2021}.

The level-one Atkin--Lehner group is
\[
W_N
=
N_{B^\times}(\OO_B)/\Q^\times\OO_B^\times
\simeq
\prod_{\ell\mid N}\Z/2\Z.
\]
We write \(W_\ell\) for its standard generator at \(\ell\mid N\).  These
involutions are defined over \(\Q\) and act on \(X^N\)
\cite[\S43.1]{Voight2021}.

\subsection{CM points}
Let $D=d_0f^2<0$, with $d_0$ fundamental, and put
$K_D=\Q(\sqrt{d_0})$, $\OO_D=\Z+f\OO_{K_D}$, and
$h(D)=\#\Pic(\OO_D)$. Denote by $\CM_N(D)$ the CM points of discriminant
$D$ on $X^N$, equivalently normalized optimal embeddings
$\OO_D\hookrightarrow\OO_B$ modulo conjugation by $\OO_B^1$
\cite[Definition~5.5]{GonzalezRotger2006};
\cite[\S~43.7]{Voight2021}.
We always assume $(f,N)=1$.

The set $\CM_N(D)$ is nonempty if and only if $N$ is of nonsplit type
modulo $d_0$~\cite[\S\S30.5--30.7]{Voight2021}. Every such point is defined
over the ring class field $H_D$ of $\OO_D$, and under the Artin isomorphism
$\Pic(\OO_D)\simeq\Gal(H_D/K_D)$, Galois acts through the ideal-class
action~\cite[\S43.8.2]{Voight2021}.

When $\CM_N(D)$ is nonempty, the commuting actions of $\Pic(\OO_D)$ and
\[
 W_N(D)=\langle W_\ell:\ell\mid N,\ \ell\nmid D\rangle
\]
give a simply transitive action of $\Pic(\OO_D)\times W_N(D)$ on
$\CM_N(D)$~\cite[Proposition~5.6]{GonzalezRotger2006}. In particular,
\[
 \#\CM_N(D)
 =h(D)\prod_{\ell\mid N}\left(1-\left(\frac{D}{\ell}\right)\right)
 =h(D)\,2^{\#\{\ell\mid N:\ell\nmid D\}}.
\]

\subsection{The Gross--Zagier--Zhang formula}\label{subsec:elliptic-gz}
For an elliptic curve $E/\Q$ of conductor $N$, modularity and Jacquet--Langlands give a nonzero quotient
$\alpha:\operatorname{Jac}(X^N)\to E$; when the rational isogeny class is a singleton, we take
the optimal quotient~\cite[\S\S2.6, 4.4--4.6]{Darmon2004}.
Let $\xi\in\Pic(X^N)\otimes\Q$ be the degree-one normalized Hodge class~\cite[p.~2528]{CaiShuTian2014}.
Suppose $D=df^2$, $K=\Q(\sqrt{D})$ and $N$ is of nonsplit type
modulo $d$ with $\gcd(f,N)=1$. For $\tau\in\CM_N(D)$, put
\[
 Z_{E,D}(\tau)=\sum_{\sigma\in\Gal(H_D/K)}
        \alpha([\tau^\sigma]-\xi)
        \quad\text{in }E(K)\otimes_\Z\Q.
\]

\begin{theorem}[Gross--Zagier--Zhang formula]\label{thm:untwisted-gz}
Suppose that
\[
 a_\ell=1\qquad\text{for every }\ell\mid\gcd(N,d).
\]
Then $L(E/K,s)=L(E,s)L(E^d,s)$ has functional-equation sign $-1$, and
\begin{equation}\label{eq:gz}
 \widehat h_E\bigl(Z_{E,D}(\tau)\bigr)
       =C_{E,D}L'(E/K,1),\qquad C_{E,D}\ge0.
\end{equation}
\end{theorem}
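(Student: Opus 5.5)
The plan is to establish the two assertions separately. First, I compute the global root number of $L(E/K,s)$ as a product of local root numbers of the base change $E_K$. Second, I specialize the general Gross--Zagier formula on Shimura curves of Yuan--Zhang--Zhang, in the explicit form of Cai--Shu--Tian~\cite{CaiShuTian2014}, to the trivial character of $\Gal(H_D/K)$ and the level-one curve $X^N$. The factorization $L(E/K,s)=L(E,s)L(E^d,s)$ is the standard Artin formalism for the quadratic character $\chi_d$ of $K$, so it needs no argument.

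\emph{Sign computation.} I write $\varepsilon(E/K)=\prod_w\varepsilon(E/K_w)$ over places $w$ of $K$ and group the factors by rational places $v$.
\begin{itemize}
\item At $v=\infty$, $K$ is complex, and the single complex place contributes $-1$.
\item At a prime $v\nmid N$, $E$ has good reduction, so the contribution is $+1$. At such primes ramified in $K$, I would use that a good-reduction curve over a ramified quadratic extension has local root number $\chi_{d,v}(-1)$-type factors, and that these cancel globally by the product formula for $\chi_d$.
\item At $\ell\mid N$, the nonsplit-type hypothesis says $\ell$ is inert or ramified in $K$, so there is a single place $w\mid\ell$. Since $N$ is squarefree, $E$ has multiplicative reduction at $\ell$. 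If $\ell$ is inert, $E$ acquires split multiplicative reduction over the unramified quadratic extension, so $\varepsilon(E/K_w)=-1$. If $\ell$ is ramified, the residue field of $K_w$ is $\F_\ell$, so $E/K_w$ has split multiplicative reduction exactly when $a_\ell=1$. The hypothesis $a_\ell=1$ therefore again gives $\varepsilon(E/K_w)=-1$.
\end{itemize}
Since $N$ is the discriminant of an indefinite division algebra, it has an even number of prime factors. Hence $\varepsilon(E/K)=(-1)^{\#\{\ell\mid N\}}\cdot(-1)=-1$. Equivalently, the set of places where the Tunnell--Saito local invariant $\varepsilon_v(\pi_{E,K})\chi_{d,v}(-1)$ equals $-1$ is exactly $\{\ell\mid N\}\cup\{\infty\}$. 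This is the ramification set of $B$, and that identification is what allows the Yuan--Zhang--Zhang formula to be applied on $X^N$.

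\emph{Height formula.} By Jacquet--Langlands, $\pi_E$ transfers to $B^\times$, and $\alpha$ realizes the $E$-isotypic quotient of $\operatorname{Jac}(X^N)$. The point $\sum_\sigma([\tau^\sigma]-\xi)$ is the toric period for the trivial character $\Omega=\mathbf 1$ of $\widehat K^\times/K^\times\widehat{\OO}_D^\times$, with $\xi$ the Hodge-class normalization used in~\cite{CaiShuTian2014}. The explicit Yuan--Zhang--Zhang formula then gives
\[
\widehat h_E\bigl(Z_{E,D}(\tau)\bigr)=C_{E,D}\,L'(E/K,1),
\]
where $C_{E,D}$ is a product of positive archimedean factors (a Petersson norm or $\deg\alpha$ normalization, $\sqrt{|D|}$, powers of $2$ and $\pi$), local factors at $\ell\mid N$ and at $\ell\mid f$, and the local toric integrals. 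For $\Omega$ trivial and the maximal order $\OO_B$, these local toric integrals are nonnegative, being squares of absolute values or explicit positive rational numbers.

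\emph{Main obstacle.} I expect the main difficulty to lie in the bookkeeping at the ramified primes $\ell\mid\gcd(N,d)$ and at primes dividing $f$. At these primes one must check that the optimal-embedding normalization of $\CM_N(D)$ matches the test vector used in~\cite{CaiShuTian2014}. In particular, at ramified $\ell\mid N$ the local toric functional must be nonzero exactly when $a_\ell=1$, which is the same condition as in the sign computation. If the local factor vanished, the statement would still hold with $C_{E,D}=0$. So nonnegativity, not positivity, is all that is asserted, and it follows once each local factor is identified as a norm or an explicit nonnegative constant.
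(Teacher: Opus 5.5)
Your proposal is correct and follows the paper's route: the paper gives no argument beyond quoting this statement as the untwisted elliptic specialization of Yuan--Zhang--Zhang in the explicit form of Cai--Shu--Tian. Your local root-number computation is a correct explicit check of what that citation contains: the complex place contributes $-1$, each $\ell\mid N$ contributes $-1$ under the nonsplit-type hypothesis and $a_\ell=1$, so $\varepsilon(E/K)=-1$, and the Tunnell--Saito set equals the ramification set of $B$.

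The one slip is the aside at primes $v\nmid N$ ramified in $K$. In your grouping $\prod_w\varepsilon(E/K_w)$ these contribute exactly $+1$, because $E/K_w$ has good reduction. The $\chi_{d,v}(-1)$ factors you mention belong to $w_v(E^d)$ in the grouping over $\Q$, and there they do not cancel among themselves: by the product formula, their product over all finite ramified $v$ is $\chi_{d,\infty}(-1)=-1$. Your final count omits them, which is correct.
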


This version is the untwisted elliptic specialization of~\cite[Theorem~1.2]{YuanZhangZhang2013}, in the form given in~\cite[\S2C, equation~(2-4)]{CaiShuTian2014}.

\subsection{Proof strategy}
\label{subsec:normalization}

Analytic rank at least two gives
$L'(E/K,1)=0$, since $L(E/K,s)=L(E,s)L(E^d,s)$.
Theorem~\ref{thm:untwisted-gz} therefore gives $Z_{E,D}=0$ in $E(K)\otimes\Q$ for every admissible $D$.

We impose the following hypotheses on the elliptic curve under consideration.

\begin{assumption}\label{ass:working}
The elliptic curve $E/\Q$ satisfies the following conditions:
\begin{enumerate}
\renewcommand{\labelenumi}{(\roman{enumi})}
\item Its conductor is a product of two primes $N=pq$, with $p<q$.
\item Its analytic rank $r_{\operatorname{an}}(E)$ is at least two. By Gross--Zagier--Kolyvagin~\cite[Theorem~3.22]{Darmon2004}, it suffices to have algebraic
rank at least two.
\item Its rational isogeny class consists of a single curve.
\item There is a fundamental discriminant $d_0<0$ with $h(d_0)=1$ such that $N$ is of inert type modulo $d_0$.
\end{enumerate}
\end{assumption}
LMFDB records rank two and a singleton isogeny class for each of our thirteen curves~\cite{LMFDB}. 

\begin{lemma}\label{lem:biquadratic-torsion-free}
    For an elliptic curve $E/\Q$ satisfying Assumption \ref{ass:working}, $E(F)_{\tors}=0$ for any biquadratic field $F=\Q(\sqrt u,\sqrt v)$ .
\end{lemma}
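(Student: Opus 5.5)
Fix a biquadratic field $F=\Q(\sqrt u,\sqrt v)$. Suppose $E(F)_{\tors}\neq0$; the plan is to contradict the conditions in Assumption~\ref{ass:working}. The key structural input is that $\Gal(F/\Q)\simeq(\Z/2)^2$, so $E(F)$ is built from $E(\Q)$ and the three quadratic twists $E^{u},E^{v},E^{uv}$. Concretely, for odd $n$ the map
\[
E(F)[n]\;\longrightarrow\; E(\Q)[n]\oplus E^u(\Q)[n]\oplus E^v(\Q)[n]\oplus E^{uv}(\Q)[n]
\]
obtained by projecting onto the $\chi$-eigenspaces of the Galois action is an isomorphism, because $2$ is invertible on odd torsion. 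Hence an odd-order torsion point over $F$ yields a rational point of odd prime order $\ell$ on $E$ or on one of its quadratic twists. Each twist has a single-curve isogeny class, since isogenies twist along with the curve. A rational point of order $\ell$ generates a rational subgroup of order $\ell$, hence a rational $\ell$-isogeny. Its target would be a second curve in the isogeny class of $E$ (an $\ell$-isogeny with $\ell$ odd is not an isomorphism onto $E$ unless $E$ has CM, and CM is impossible for semistable conductor $pq$). This contradicts condition~(iii).

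For $2$-torsion, a nonzero $P\in E(F)[2]$ has a $\Gal(\bar\Q/\Q)$-orbit inside $E[2]\setminus\{0\}$ of size $1$ or $2$, because it is fixed by the index-$\le4$ subgroup $\Gal(\bar\Q/F)$ and the image of Galois in $\GL_2(\F_2)\simeq S_3$ acting on the three nonzero points has orbits of size $1,2$ or $3$. An orbit of size $3$ would require the stabilizer of $P$ to have index $3$, but that stabilizer contains $\Gal(\bar\Q/F)$, whose index is a power of $2$. So the orbit has size $1$ or $2$. Since the three nonzero $2$-torsion points are permuted, an orbit of size $2$ leaves a third point fixed. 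Either way some nonzero $2$-torsion point is rational, giving a rational $2$-isogeny and a second curve in the isogeny class, again contradicting~(iii). Equivalently, the $2$-division cubic has a rational root. Points of order $4$ or more need no separate treatment, since every nontrivial torsion group contains an element of prime order.

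The main obstacle is making the isogeny-class step airtight, namely that a rational $\ell$-subgroup gives a \emph{distinct} curve in the isogeny class. I would argue that $E/C\cong E$ forces an endomorphism of degree $\ell$, hence CM, which is excluded because $E$ has multiplicative reduction at $p$ (conductor $pq$ squarefree). The twist case needs the check that twisting preserves the singleton property; here I would apply the same argument directly to $E^\chi$, whose isogeny class is the twist of that of $E$. If one prefers to avoid twists, an alternative I would try first is to use reduction: $E(F)_{\tors}$ injects into $\tilde E(\F_{\mathfrak l})$ for good primes $\mathfrak l$ of residue degree $\le2$, and then one checks the relevant point counts against the LMFDB data. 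However, this gives only a case-by-case argument, so the isogeny argument above is the intended route.
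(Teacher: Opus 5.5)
Your proof is correct, but it follows a different route from the paper's. The paper argues uniformly in $\ell$. Conditions (i) and (iii) make $E[\ell]$ irreducible, and Serre's theorem for semistable curves (quoted as \cite[Proposition~1.1]{RibetStein2001}) upgrades this to surjectivity onto $\GL_2(\F_\ell)$. A single nonzero $\ell$-torsion point in the Galois extension $F$ then spans all of $E[\ell]$. So $\Q(E[\ell])\subseteq F$, and the nonabelian group $\GL_2(\F_\ell)$ would be a quotient of $(\Z/2\Z)^2$, a contradiction.

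You use only the absence of rational isogenies of prime degree. For odd $\ell$, the idempotents of $(\Z/\ell\Z)[(\Z/2\Z)^2]$ split $E(F)[\ell]$ into quadratic-twist pieces. For $\ell=2$, the orbit-size count forces a rational $2$-torsion point. Your argument is more elementary and slightly more general. Semistability enters only to exclude CM, and even that is unnecessary: $\operatorname{End}_{\Q}(E)=\Z$ for every $E/\Q$, so $E/C\cong E$ over $\Q$ would give a rational endomorphism of prime degree, which is impossible.

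You can also drop the detour through the twists' isogeny classes. A $\chi$-eigenvector $P$ satisfies $\sigma P=\pm P$ for every $\sigma$, so $\langle P\rangle$ is already a Galois-stable subgroup of $E$ itself.

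The paper's big-image route is shorter, and it is the template reused in Lemma~\ref{lem:ringtorsion}. There $\Gal(H_D/\Q)$ is nonabelian, and the absence of isogenies alone would not immediately exclude $\Q(E[\ell])\subseteq H_D$. What does the work in that lemma is surjectivity, via the nonabelian commutator subgroup $\operatorname{SL}_2(\F_\ell)$.
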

\begin{proof}
    Conditions (i) and (iii) imply that $E[\ell]$ is irreducible for every prime $\ell$
and hence has Galois image $\GL_2(\F_\ell)$~\cite[Proposition~1.1]{RibetStein2001}.
If a biquadratic field $F=\Q(\sqrt u,\sqrt v)$ contained a nonzero point of
order $\ell$, normality and irreducibility would imply $E[\ell]\subset E(F)$.
Then $\GL_2(\F_\ell)$ would be a quotient of the abelian group
$\Gal(F/\Q)$, a contradiction. Thus $E(F)_{\tors}=0$.
\end{proof}

For $E/\Q$ satisfying Assumption~\ref{ass:working}, and for the class number one discriminant $d_0$, choose $\tau_0\in\CM_N(d_0)$, defined over $K_0=\Q(\sqrt{d_0})$, and define
\begin{equation}\label{eq:normalized}
 \varphi_E:X^N\longrightarrow E,\qquad
 \varphi_E(\tau)=\alpha([\tau]-[\tau_0]).
\end{equation}
For every $\tau\in\CM_N(d_0)$, Theorem~\ref{thm:untwisted-gz} gives
$\alpha([\tau]-\xi)=0$ in $E(K_0)\otimes\Q$. The same holds for $\tau_0$,
so $\varphi_E(\tau)$ is torsion in $E(K_0)$ and therefore zero.

The normalization $\varphi_E$, initially defined over $\Q(\sqrt{d_0})$, descends to $\Q$. Indeed, every Galois conjugate
$\tau_0^\sigma$ is again in $\CM_N(d_0)$, so this vanishing gives
$\alpha([\tau_0^\sigma]-[\tau_0])=O$ and hence
$\varphi_E^\sigma=\varphi_E$.

The definition~\eqref{eq:normalized} and this vanishing remove the translation term in
Atkin--Lehner equivariance: that term is $\varphi_E(W_m\tau_0)=O$.
Consequently,
\begin{equation}\label{eq:equivariance}
 \varphi_E\circ W_m=[\varepsilon_m]\circ\varphi_E,
 \qquad \varepsilon_m=\prod_{\ell\mid m}a_\ell
 \quad(m\mid N),
\end{equation}
where the quaternionic $W_\ell$-eigenvalue is $a_\ell$, the negative of the classical modular Atkin--Lehner sign~\cite[Theorem~1.2]{BertoliniDarmon1996}.

\begin{proposition}\label{prop:classone}
Under Assumption~\ref{ass:working}, one has $\varphi_E(\tau)=O$ for every CM point $\tau$ of discriminant $D$ with $h(D)=1$ on $X^{N_E}$.
\end{proposition}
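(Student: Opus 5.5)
The plan is to reduce to the Gross--Zagier--Zhang vanishing wherever Theorem~\ref{thm:untwisted-gz} applies, and to use Atkin--Lehner equivariance~\eqref{eq:equivariance} in the one case where its sign hypothesis can fail. Throughout, write $D=df^2$ with $d$ fundamental and $(f,N)=1$, and put $K=\Q(\sqrt d)$. Since $\CM_N(D)\neq\emptyset$, $N$ is of nonsplit type modulo $d$. The order $\OO_D$ has $h(D)=1$, so $H_D=K$. Hence every $\tau\in\CM_N(D)$ is defined over $K$, and the Galois sum defining $Z_{E,D}(\tau)$ has the single term $\alpha([\tau]-\xi)$. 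Also $h(d)=1$, because $h(d)$ divides $h(D)$. So by Lemma~\ref{lem:genus}, $d\in\{-4,-8,-p'\}$ with $p'$ prime, and at most one prime $\ell\mid N=pq$ divides $d$.

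\emph{Case 1: $a_\ell=1$ for every $\ell\mid\gcd(N,d)$.} This includes the case $\gcd(N,d)=1$. Theorem~\ref{thm:untwisted-gz} applies, and $L'(E/K,1)=0$ because $r_{\operatorname{an}}(E)\ge2$. Hence $\widehat h_E(\alpha([\tau]-\xi))=0$, so $\alpha([\tau]-\xi)=0$ in $E(K)\otimes\Q$. The same vanishing holds for $\tau_0$ in $E(K_0)\otimes\Q$, as shown just before~\eqref{eq:equivariance}. Therefore
\[
\varphi_E(\tau)=\alpha([\tau]-\xi)-\alpha([\tau_0]-\xi)
\]
is a torsion point of $E(KK_0)$. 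The field $KK_0$ is quadratic or biquadratic, so by Lemma~\ref{lem:biquadratic-torsion-free} (which covers the quadratic case as a subfield) we get $\varphi_E(\tau)=O$.

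\emph{Case 2: there is a unique prime $\ell\mid\gcd(N,d)$ with $a_\ell=-1$.} Here I would use the Atkin--Lehner involution $W_\ell$ in place of Gross--Zagier--Zhang. Since $\ell$ ramifies in $K$, $W_\ell$ preserves $\CM_N(D)$ and commutes with $W_N(D)$. The standard description (Gonz\'alez--Rotger, or Voight \S43.8) is that for $\ell\mid D$ the involution $W_\ell$ acts on $\CM_N(D)$ as the ideal class of the ramified prime of $\OO_D$ above $\ell$. Because $\Pic(\OO_D)$ is trivial, this gives $W_\ell\tau=\tau$. Then~\eqref{eq:equivariance} yields $\varphi_E(\tau)=\varphi_E(W_\ell\tau)=[a_\ell]\varphi_E(\tau)=-\varphi_E(\tau)$. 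So $\varphi_E(\tau)$ is $2$-torsion in $E(K)$, where $E$ and $\varphi_E$ are defined over $\Q$, and Lemma~\ref{lem:biquadratic-torsion-free} again gives $\varphi_E(\tau)=O$.

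The main obstacle is justifying $W_\ell\tau=\tau$ in Case~2. The simple transitivity of $\Pic(\OO_D)\times W_N(D)$ shows only that $W_\ell\tau$ is some point of the form $W_m\tau$ with $m$ built from primes of $N$ coprime to $D$. To pin this down I would cite the explicit identification of $W_\ell$, for $\ell\mid D$, with the action of the class of $\mathfrak l$, where $\mathfrak l^2=(\ell)$. If that identification is unavailable in the needed form, the fallback is to show directly that the optimal embedding $\OO_D\hookrightarrow\OO_B$ is normalized by an element of reduced norm $\ell$, namely the image of $\sqrt d$ times a unit, which lies in $\OO_B$ and generates the two-sided ideal above $\ell$. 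That element fixes $\tau$ and represents $W_\ell$.
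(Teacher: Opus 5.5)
Your proposal is correct and follows the paper's proof step for step. When $a_\ell=1$ for every $\ell\mid\gcd(N,D)$, you use Gross--Zagier--Zhang vanishing for both $\tau$ and $\tau_0$ together with the biquadratic torsion lemma; otherwise you use ramified reciprocity $W_\ell\tau=\tau$ (which the paper simply cites as \cite[Lemma~5.9]{GonzalezRotger2006}) together with~\eqref{eq:equivariance} to get $2\varphi_E(\tau)=O$. One small correction to your fallback: the normalizing element should be the image of a generator of the ramified prime of $\OO_D$ above $\ell$ (principal since $h(D)=1$, invertible since $\ell\nmid f$), not $\sqrt d$ times a unit, which has norm $|d|$ and fails for $d=-4$, where you need $1+i$ rather than $2i$.
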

\begin{proof}
If $a_\ell=1$ for every $\ell\mid\gcd(N,D)$, then Theorem~\ref{thm:untwisted-gz} implies that $\alpha([\tau]-\xi)=0$ in $E(H_D)\otimes\Q=E(K_D)\otimes\Q$. The same holds for $\tau_0$. Hence $\varphi_E(\tau)$ is torsion in $E(K_0K_D)$, and the preceding torsion Lemma \ref{lem:biquadratic-torsion-free} makes it zero. Otherwise, some prime $\ell\mid\gcd(N,D)$ has $a_\ell=-1$. Since $h(D)=1$, ramified reciprocity~\cite[Lemma~5.9]{GonzalezRotger2006} gives $W_\ell\tau=\tau$. Equation~\eqref{eq:equivariance} then gives $2\varphi_E(\tau)=O$, so again $\varphi_E(\tau)=O$.
\end{proof}

Therefore, one can solve the class number one problem by studying the fiber of $\varphi_E$ above the identity. Our strategy is to determine this single fiber. If it is entirely accounted for by known CM points, then no hypothetical class-number-one fundamental discriminant with $|D|>4q$ could exist. Then a finite amount of check would find all class-number-one fundamental discriminant with $|D|\leq4q$. We establish the fiber by exhibiting CM points whose total contribution, including ramification, equals the parametrization degree.

\section{Degrees and Atkin--Lehner fixed points}\label{sec:involutions}

\subsection{The Takahashi degree comparison}\label{subsec:degree}
The Ribet--Takahashi degree comparison~\cite{RibetTakahashi1997,Takahashi2001}, in the no-isogeny specialization stated in~\cite[\S6.2]{Pasten2024}, gives
\begin{equation}\label{eq:degree}
 \deg\varphi_E=\delta_E=\frac{\delta_{0,E}}{t_pt_q},\qquad t_\ell=v_\ell(\Delta_E).
\end{equation}
Here $\delta_{0,E}$ is the classical modular degree, $\Delta_E$ is the minimal discriminant, and $t_p=v_p(\Delta_E^{\min})$. By the definition of \(\varphi_E\), its induced pushforward on Jacobians is \(\varphi_{E,*}=\alpha\). The standard identity \(\varphi_{E,*}\varphi_E^*=[\deg\varphi_E]\) therefore gives \(\alpha\alpha^\vee=[\deg\varphi_E]\), identifying the degree in the Ribet–Takahashi formula with that of our normalization. The degrees $\delta_E$ in Table~\ref{tab:complete-fibers} are calculated by this formula, using the data of $\delta_{0,E}$ from LMFDB.

\subsection{Fixed points and their images}\label{subsec:fixed-points}
The fixed-point formula of Ogg~\cite[\S1, p.~283]{Ogg1983}, in the form given in~\cite[Lemma~6]{Tu2014}, says, for $1<m\mid N$,
\begin{equation}\label{eq:fixed}
 \Fix(W_m)=\begin{cases}
 \CM_N(-4)\cup\CM_N(-8),&m=2,\\
 \CM_N(-m)\cup\CM_N(-4m),&m\equiv3\pmod4,\\
 \CM_N(-4m),&\text{otherwise}.
 \end{cases}
\end{equation}

\begin{lemma}\label{lem:fixed-torsion}
If $\varepsilon_m=-1$ and $x\in\Fix(W_m)$, then $\varphi_E(x)\in E[2]$. In particular, if $x\in\CM_N(D)$ and $E(H_D)[2]=\{O\}$, then $\varphi_E(x)=O$.
\end{lemma}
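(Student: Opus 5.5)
The plan is to apply the equivariance relation \eqref{eq:equivariance} at the fixed point and then use Galois rationality to reduce to the $2$-torsion hypothesis. Let $x\in\Fix(W_m)$ with $\varepsilon_m=-1$. Since $W_mx=x$, we have
\[
\varphi_E(x)=\varphi_E(W_mx)=[\varepsilon_m]\varphi_E(x)=-\varphi_E(x).
\]
Hence $2\varphi_E(x)=O$, that is, $\varphi_E(x)\in E[2]$. The only point to check is that \eqref{eq:equivariance} holds with no translation term. This was arranged by the normalization \eqref{eq:normalized}, since $\varphi_E(W_m\tau_0)=O$. So the first assertion is immediate.

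For the second assertion, take $x\in\CM_N(D)$. By \S\ref{sec:setup}, such a point is defined over the ring class field $H_D$. The map $\varphi_E$ is defined over $\Q$, because the normalization was shown to descend. Therefore $\varphi_E(x)$ is $\Gal(\overline{\Q}/H_D)$-invariant and lies in $E(H_D)$. Combined with the first part, this gives $\varphi_E(x)\in E(H_D)[2]$. By hypothesis this group is trivial, so $\varphi_E(x)=O$.

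The only step needing any care is the field of definition. Here I would cite the statement in \S\ref{sec:setup} that CM points of discriminant $D$ are rational over $H_D$ (with $(f,N)=1$), together with the descent of $\varphi_E$ to $\Q$ established just before Proposition~\ref{prop:classone}. No further obstacle is expected; the lemma is essentially a one-line consequence of \eqref{eq:equivariance}.
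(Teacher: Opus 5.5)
Your proof is correct and follows the paper's argument. Evaluating \eqref{eq:equivariance} at a $W_m$-fixed point gives $2\varphi_E(x)=O$, and since $x\in X^N(H_D)$ and $\varphi_E$ is defined over $\Q$, the point $\varphi_E(x)$ lies in $E(H_D)[2]$.
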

\begin{proof}
Equation~\eqref{eq:equivariance} gives $\varphi_E(x)=\varphi_E(W_mx)=-\varphi_E(x)$. Moreover, $x\in X^N(H_D)$ and $\varphi_E$ is defined over $\Q$.
\end{proof}

\begin{lemma}\label{lem:ringtorsion}
Let $E/\Q$ be semistable and admit no rational isogeny of prime degree.
Let $H_D$ be the ring class field of an order in an imaginary quadratic
field $K$, and let $\Delta_E$ be the minimal discriminant of $E$. Then
\begin{equation}\label{eq:ringtorsion}
 E(H_D)_{\tors}=0\qquad\text{if }K\ne\Q(\sqrt{\Delta_E}).
\end{equation}
\end{lemma}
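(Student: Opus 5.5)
The approach is to reduce to prime-order torsion and use the structure of $\Gal(H_D/\Q)$. If $E(H_D)_{\tors}\neq0$, then $E(H_D)$ contains a nonzero point of some prime order $\ell$. Since $E$ is semistable with no rational isogeny of prime degree, \cite[Proposition~1.1]{RibetStein2001} shows that $\rho_{E,\ell}:\Gal(\bar\Q/\Q)\to\GL_2(\F_\ell)$ is surjective for every $\ell$. The field $H_D$ is Galois over $\Q$, so $E(H_D)[\ell]$ is a nonzero Galois-stable subspace of $E[\ell]$. By irreducibility it is all of $E[\ell]$, hence $\Q(E[\ell])\subseteq H_D$. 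It remains to rule out this inclusion.

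The obstruction comes from the group structure. We have $\Gal(H_D/\Q)\simeq\Pic(\OO_D)\rtimes\Z/2$, with complex conjugation acting on $\Pic(\OO_D)$ by inversion, so it is a generalized dihedral group. Every quotient of a generalized dihedral group is again generalized dihedral or abelian. In particular, every quotient has an abelian subgroup of index at most $2$, and its commutator subgroup is abelian. So $\GL_2(\F_\ell)$ would have to be such a quotient.

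For $\ell\ge5$ this is impossible, since the commutator subgroup of $\GL_2(\F_\ell)$ is $\mathrm{SL}_2(\F_\ell)$, which is nonabelian. For $\ell=3$, $\GL_2(\F_3)$ has order $48$ and commutator subgroup $\mathrm{SL}_2(\F_3)$, which is again nonabelian. The real work is at $\ell=2$. Here $\GL_2(\F_2)\simeq S_3$ is itself dihedral, so the group-theoretic argument gives nothing, and this is the main obstacle.

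For $\ell=2$ I would argue as follows. The quotient $S_3$ of $\Gal(H_D/\Q)$ has a normal index-$2$ subgroup $A_3$. Its fixed field is the unique quadratic subfield of $\Q(E[2])$, namely $\Q(\sqrt{\Delta_E})$. The $3$-cycles must come from the abelian normal subgroup $\Pic(\OO_D)$, because any element outside $\Pic(\OO_D)$ is an involution and cannot map to a $3$-cycle. Therefore $A_3$ is the image of $\Pic(\OO_D)=\Gal(H_D/K)$, so the image of $\Gal(H_D/K)$ lies in $A_3$ and $K\supseteq\Q(\sqrt{\Delta_E})$. Since both fields are quadratic, $K=\Q(\sqrt{\Delta_E})$.

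To make the step "$3$-cycles come from $\Pic(\OO_D)$" precise, I would note the following. The image of $\Pic(\OO_D)$ in $S_3$ is a normal abelian subgroup, so it is $1$ or $A_3$. If it were trivial, the image would be a quotient of $\Z/2$, which contradicts surjectivity onto $S_3$. Hence the image is $A_3$, and the quadratic subfield $\Q(\sqrt{\Delta_E})$ of $\Q(E[2])$ is fixed by $\Gal(H_D/K)$. This contradicts the hypothesis $K\neq\Q(\sqrt{\Delta_E})$, which proves \eqref{eq:ringtorsion}. I also need $\Delta_E$ not to be a square, but this is automatic from surjectivity of $\rho_{E,2}$.
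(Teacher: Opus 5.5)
Your proposal is correct and follows essentially the same route as the paper. You get surjectivity of $\rho_{E,\ell}$ from \cite[Proposition~1.1]{RibetStein2001}, use normality of $H_D/\Q$ to force $E[\ell]\subseteq E(H_D)$, and use the generalized dihedral (hence metabelian) structure of $\Gal(H_D/\Q)$ to exclude odd $\ell$ via the nonabelian commutator subgroup $\mathrm{SL}_2(\F_\ell)$. For $\ell=2$, $\Pic(\OO_D)$ must map onto $A_3$, so $K$ is the quadratic subfield $\Q(\sqrt{\Delta_E})$ of $\Q(E[2])$; your justification of that step (elements outside $\Pic(\OO_D)$ are involutions) supplies a detail the paper only asserts.
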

\begin{proof}
For every prime $\ell$, the representation on $E[\ell]$ is irreducible
and therefore has image $\GL_2(\F_\ell)$ by
\cite[Proposition~1.1]{RibetStein2001}.
If $E(H_D)$ contains a nonzero point of order $\ell$, normality of
$H_D/\Q$ and irreducibility imply $E[\ell]\subset E(H_D)$.
Thus $\GL_2(\F_\ell)$ would be a quotient of
$\Gal(H_D/\Q)=A\rtimes\langle\iota\rangle$, where
$A=\Gal(H_D/K)$ is abelian and $\iota$ acts by inversion.
This group is metabelian, whereas for odd $\ell$ the commutator subgroup
of $\GL_2(\F_\ell)$ is the nonabelian group $\operatorname{SL}_2(\F_\ell)$.
Consequently, $\ell=2$ if $E(H_D)_{\tors}$ is non-trivial.
In a surjection $A\rtimes\langle\iota\rangle\twoheadrightarrow S_3$,
the image of $A$ is necessarily $A_3$, so the quadratic subfield of
$\Q(E[2])$ is $K$. This quadratic subfield is
$\Q(\sqrt{\Delta_E})$.
\end{proof}

For all pairs $(E,D)$ occurring in~\eqref{eq:fixed} with $E$ in
Theorem~\ref{thm:fibers}, $\varepsilon_m=-1$, and $\CM_N(D)\ne\varnothing$,
one has $\Q(\sqrt D)\ne\Q(\sqrt{\Delta_E})$, except when
$(E,D)=(\labelE{1594.a1},-3188)$.
Lemma~\ref{lem:ringtorsion} therefore gives $E(H_D)_{\tors}=0$ in every
nonexceptional case.

\begin{corollary}\label{cor:fixed-zero}
For every curve in Theorem~\ref{thm:fibers} and every $W_m$ with $\varepsilon_m=-1$, one has $\Fix(W_m)\subseteq\varphi_E^{-1}(O)$.
\end{corollary}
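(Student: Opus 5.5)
The plan is to reduce the corollary to Lemma~\ref{lem:fixed-torsion} point by point, using the explicit description~\eqref{eq:fixed} of $\Fix(W_m)$. Fix a curve $E$ in Theorem~\ref{thm:fibers} with conductor $N=pq$, and a divisor $1<m\mid N$ with $\varepsilon_m=-1$.

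\emph{Step 1: reduce to a single CM set.} By~\eqref{eq:fixed}, $\Fix(W_m)$ is a union of at most two sets $\CM_N(D)$, with $D\in\{-4,-8,-m,-4m\}$. It therefore suffices to fix one such $D$ with $\CM_N(D)\neq\varnothing$ and one point $x\in\CM_N(D)$. Lemma~\ref{lem:fixed-torsion} gives $\varphi_E(x)\in E[2]$, and $\varphi_E(x)$ is rational over $H_D$, since $x\in X^N(H_D)$ and $\varphi_E$ is defined over $\Q$. So we must show $E(H_D)[2]=0$.

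\emph{Step 2: the generic case.} Each curve has prime-product conductor, so it is semistable. Assumption~\ref{ass:working}(iii) rules out rational isogenies of prime degree. Lemma~\ref{lem:ringtorsion} therefore applies and gives $E(H_D)_{\tors}=0$ whenever $\Q(\sqrt D)\neq\Q(\sqrt{\Delta_E})$. We then carry out the finite check stated before the corollary: run through the thirteen curves, the $W_m$ with $\varepsilon_m=-1$ (read off from the $a_\ell$), and the discriminants allowed by~\eqref{eq:fixed}. In each case we compare the squarefree part of $D$ with that of $\Delta_E$, taken from LMFDB. This disposes of every case except $(\labelE{1594.a1},-3188)$.

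\emph{Step 3: the exceptional case, which I expect to be the main obstacle.} Here $K=\Q(\sqrt{-797})=\Q(\sqrt{\Delta_E})$, so $\Q(E[2])$ may well lie inside $H_D$, and the torsion lemma gives nothing. I would try three things, in this order.

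First, check whether the cubic field cut out by $E[2]$ actually embeds in the ring class field. This would use class field theory: the $S_3$ field $\Q(E[2])$ corresponds to an index-$3$ subgroup of $\Pic(\OO_D)$ only if its conductor over $K$ divides that of $\OO_D$. Since $E$ is semistable, $\Q(E[2])/K$ is unramified outside $2N$. It is a $3$-part question whether the relevant cubic class is present in $\Pic(\OO_{-3188})$. If $3\nmid h(-3188)$, or the cubic field is the wrong one, we get $E(H_D)[2]=0$.

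Second, if $E[2]\subset E(H_D)$ does occur, I would instead exploit more structure. Take a second involution $W_{m'}$ with $\varepsilon_{m'}=+1$, or use the Galois action by $\Pic(\OO_D)$ on $\CM_N(D)$ together with the fact that $\varphi_E$ is $\Q$-rational. Since $\Gal(H_D/\Q)$ permutes $\CM_N(D)$ transitively up to $W_N(D)$ and $\varphi_E$ commutes with Galois, the images $\varphi_E(x)$ form a Galois-stable subset of $E[2]$. Because $E(\Q)[2]=0$, they cannot be forced to be $O$ this way, but this structure should at least pin them down.

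Third, as a fallback, I would treat this case via Gross--Zagier. Here $p=2$ and $q=797$, with $m=797$ ramified in $D=-3188$. The sum $Z_{E,D}$ vanishes, so $\sum_\sigma\varphi_E(x^\sigma)$ is torsion. Combined with the $2$-torsion information, this lets us decide whether $\varphi_E(x)=O$, possibly by a direct computation of the images on the Cerednik--Drinfeld $2$-adic model.
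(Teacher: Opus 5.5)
Your Steps 1 and 2 match the paper's reduction exactly. The gap is Step 3: the exceptional case $(\labelE{1594.a1},-3188)$ stays open, and your first route cannot work at all.

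\textbf{Why your three routes fail.} Since $-3188$ is fundamental, $H_D$ is the Hilbert class field of $K=\Q(\sqrt{-797})$. Moreover $h(-3188)=\#\CM_N(-3188)=30$, because both $2$ and $797$ divide $D$. So $\Pic(\OO_D)\simeq\Z/30\Z$, and $K$ has a unique unramified cyclic cubic extension.

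Because $E$ is semistable, $E[2]$ is unramified outside $2\cdot 797$. By the Tate parametrization, inertia at $2$ and at $797$ acts on $E[2]$ through a group of order at most $2$, which meets $\Gal(\Q(E[2])/K)\simeq A_3$ trivially. Hence $\Q(E[2])/K$ is an unramified cyclic cubic extension, so $\Q(E[2])\subset H_D$ and $E(H_D)[2]=E[2]$. No torsion argument can succeed, which is exactly why this case is exceptional.

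Your second route gives only an all-or-nothing statement. $W_N(-3188)$ is trivial, so $\Gal(H_D/K)$ acts simply transitively on $A:=\CM_N(-3188)$. Therefore $\varphi_E(A)$ is either $\{O\}$ or the three nonzero $2$-torsion points $T_1,T_2,T_3$, each hit ten times.

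Your third route cannot separate these two possibilities. In the second case the trace is $10(T_1+T_2+T_3)=O$ anyway. The untwisted formula does not even apply here, since $a_{797}=\varepsilon_{797}=-1$ with $797\mid\gcd(N,D)$. A Cerednik--Drinfeld computation is a suggestion, not an argument.

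\textbf{The missing idea is a parity count.} It uses the positive involution you mention but never exploit.

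Since $\varepsilon_2=a_2=1$, write $\varphi_E=\psi\circ\pi$ with $\pi:X^N\to Y=X^N/\langle W_2\rangle$ and $\deg\psi=\delta_E/2=48$. The involution $\overline{W}_{797}$ induced on $Y$ satisfies $\psi\circ\overline{W}_{797}=[-1]\circ\psi$.

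Points of $\psi^{-1}(O)$ not fixed by $\overline{W}_{797}$ come in pairs with equal ramification index. At a fixed point, choose local coordinates on which $\overline{W}_{797}$ and $[-1]$ both act by negation; then $\psi$ is an odd power series, so its ramification index is odd. Hence
$\#\bigl(\Fix(\overline{W}_{797})\cap\psi^{-1}(O)\bigr)\equiv\deg\psi\equiv 0\pmod 2$.

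Now $\Fix(\overline{W}_{797})=\pi(A)\sqcup\pi(B)$ with $B=\Fix(W_{1594})=\CM_N(-6376)$. $W_2$ acts freely on $A\sqcup B$, since its fixed points have discriminant $-4$ or $-8$. So $\#\pi(A)=15$ and $\#\pi(B)=17$.

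Your Step 2 already places $B$ in the fiber, because $\varepsilon_{1594}=-1$ and $\Q(\sqrt{-6376})\ne\Q(\sqrt{-797})$. Since $A$ is all-or-nothing, the count is either $17$ or $32$. Parity rules out $17$, which gives $\varphi_E(A)=\{O\}$.
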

\begin{proof}
Lemmas~\ref{lem:fixed-torsion} and~\ref{lem:ringtorsion} settle every CM set except the single orbit in the exceptional case. In the case $(E,D)=(\labelE{1594.a1},-3188)$, we use a parity argument. Denote $p=2$ and $q=797$.
The parametrization factors as
\[
  \varphi_E=\psi\circ\pi,\qquad
  \pi:X^N\longrightarrow Y:=X^N/\langle W_p\rangle,
  \qquad \deg\psi=\delta_E/2=48.
\]
Let $\overline{W}_q$ denote the involution induced by $W_q$ on $Y$.
Since $\varepsilon_q=-1$, we have
$\psi\circ\overline{W}_q=[-1]\circ\psi$.

The key observation is the parity relation
\[
  \#\bigl(\operatorname{Fix}(\overline{W}_q)
       \cap\psi^{-1}(O)\bigr)
  \equiv \deg\psi \equiv 0 \pmod 2.
\]
Indeed, points of $\psi^{-1}(O)$ not fixed by $\overline{W}_q$ occur in
$\overline{W}_q$-pairs. At a fixed point above $O$, choose local coordinates in which
$\overline{W}_q$ and $[-1]$ both act by negation. The equivariance
identity makes the local expansion of $\psi$ odd, so the ramification
index is odd. 

Now put
\[
  A=\operatorname{Fix}(W_q)=\operatorname{CM}_N(-3188),
  \qquad
  B=\operatorname{Fix}(W_{pq})=\operatorname{CM}_N(-6376).
\]
By \eqref{eq:fixed} and Table \ref{tab:complete-fibers}, $\#A=30$ and $\#B=34$, while Lemma~\ref{lem:fixed-torsion}
and ~\ref{lem:ringtorsion} give $\varphi_E(B)=\{O\}$.
The involution $W_p$ acts freely on $A\sqcup B$, and
\[
  \operatorname{Fix}(\overline{W}_q)
      =\pi(A)\sqcup\pi(B),
  \qquad
  \#\pi(A)=15,\qquad \#\pi(B)=17.
\]

By the reciprocity law, either all of \(A\) lies in the identity fiber or none does.
Consequently,
\[
  \operatorname{Fix}(\overline{W}_q)\cap\psi^{-1}(O)
  =
  \begin{cases}
    \pi(B),\\
    \pi(A)\sqcup\pi(B),
  \end{cases}
\]
with respective cardinalities $17$ and $32$.
The parity excludes the first possibility.
Thus $\varphi_E(A)=\{O\}$, completing the exceptional case.
\end{proof}

If $\varepsilon_m=1$, the map $\varphi_E$ factors through $X^N/\langle W_m\rangle$. Each $W_m$-fixed point already known to lie above $O$ therefore contributes at least twice to the fiber. In particular, the positive involution $W_{43}$ on $\labelE{817.a1}$ doubles its two CM$(-43)$ points. On $\labelE{1466.b1}$ and $\labelE{1594.a1}$, $W_2$ doubles the two CM$(-8)$ points. 

Counting ramification, this already explains the rows of Table~\ref{tab:complete-fibers} with no additional CM points.

\section{Additional CM points}\label{sec:characters}
As Table~\ref{tab:complete-fibers} shows, the fiber may contain CM points beyond the class-number-one points and fixed points; these additional points arise from ramified local conditions, extra vanishing of twisted $L$-functions, and norm-compatibility relations.  

For $x\in\CM_N(D)$ and a ring class character $\chi$, form the twisted CM trace
\[
 P_\chi(x)=\sum_{\sigma\in\Gal(H_D/K)}\chi(\sigma)^{-1}\varphi_E(x^\sigma)
 \quad\text{in }E(H_D)\otimes\C.
\]

If all the projections $P_\chi(x)$ vanish, Fourier inversion shows that
$\varphi_E(x)$ is torsion. Hence it is $O$ whenever
$K\ne\Q(\sqrt{\Delta_E})$, by Lemma~\ref{lem:ringtorsion}.

\subsection{Ramified vanishing}
When $\gcd(N,D)\neq 1$, $P_\chi(x)$ is forced to be zero in certain cases similar to the classical argument in~\cite[Lemma~11.1]{Gross1984}. 
\begin{lemma}
\label{lem:gross-ramified}
    Let $x\in\CM_N(D)$, where $D=dc^2$ and $(c,N)=1$, and let $\chi$ be
any character of $\Gal(H_D/K)$.
For $r\mid\gcd(N,d)$, let $\mathfrak r$ be the prime of $\OO_c$
above $r$. Then
\begin{equation}\label{eq:localkill}
 \chi([\mathfrak r])\ne a_r\quad\Longrightarrow\quad P_\chi(x)=0.
\end{equation}
\end{lemma}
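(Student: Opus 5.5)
The plan is to compare two ways of computing the action of $W_r$ on the Galois orbit of $x$, in the style of Gross's argument \cite[Lemma~11.1]{Gross1984}.

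\textbf{Geometric input.} Since $r\mid\gcd(N,d)$, the prime $r$ ramifies in $K$; write $r\OO_c=\mathfrak r^2$. Here $\mathfrak r$ is invertible in $\OO_c$ because $(c,N)=1$. I will invoke ramified reciprocity \cite[Lemma~5.9]{GonzalezRotger2006}, the same input used in Proposition~\ref{prop:classone}. It says that on $\CM_N(D)$ the Atkin--Lehner involution $W_r$ agrees with the Galois action of the class $[\mathfrak r]$ under Artin reciprocity:
\[
W_r(y)=y^{\sigma_{\mathfrak r}}\quad\text{for all }y\in\CM_N(D),
\]
where $\sigma_{\mathfrak r}=\operatorname{Art}([\mathfrak r])$. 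This compatibility holds uniformly on the whole set. The reason is that $W_r$ commutes with the simply transitive $\Pic(\OO_D)$-action and has the same effect as $[\mathfrak r]$ on the optimal embedding, since the local uniformizer of $\OO_{B,r}$ normalizing the embedded $\OO_{K,r}$ can be taken to be the image of a generator of $\mathfrak r$ locally. I expect this identification to be the main obstacle. In particular, it needs care with the convention for $\operatorname{Art}$ and possibly an inverse. Since $[\mathfrak r]^2=[r\OO_c]=1$, we have $\chi([\mathfrak r])=\pm1=\chi([\mathfrak r])^{-1}$, so this ambiguity is harmless.

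\textbf{Computing $P_\chi(x)$ in two ways.} Apply the equivariance \eqref{eq:equivariance} with $m=r$, so $\varepsilon_r=a_r$. Since $\varphi_E$ is defined over $\Q$, the first way gives
\[
\sum_{\sigma}\chi(\sigma)^{-1}\varphi_E(W_r x^\sigma)=a_r\sum_\sigma\chi(\sigma)^{-1}\varphi_E(x^\sigma)=a_rP_\chi(x).
\]
For the second way, replace $W_r x^\sigma$ by $x^{\sigma\sigma_{\mathfrak r}}$; this uses that $\Gal(H_D/K)$ is abelian, so the two actions commute. Reindexing with $\sigma'=\sigma\sigma_{\mathfrak r}$ gives
\[
\sum_{\sigma'}\chi(\sigma'\sigma_{\mathfrak r}^{-1})^{-1}\varphi_E(x^{\sigma'})=\chi([\mathfrak r])P_\chi(x).
\]
Hence $(\chi([\mathfrak r])-a_r)P_\chi(x)=0$ in $E(H_D)\otimes\C$.

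\textbf{Conclusion.} If $\chi([\mathfrak r])\ne a_r$, the scalar $\chi([\mathfrak r])-a_r$ is a nonzero complex number, indeed $\pm2$. Since we work in a $\C$-vector space, it follows that $P_\chi(x)=0$, which proves \eqref{eq:localkill}.

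The translation term in Atkin--Lehner equivariance needs no separate treatment, because it was already removed by the normalization \eqref{eq:normalized}.
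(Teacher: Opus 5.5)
Your proposal is correct and follows the paper's proof. Both invoke the ramified reciprocity law of Gonz\'alez--Rotger to identify $W_r$ with $\sigma_{\mathfrak r}$ on $\CM_N(D)$. Both then evaluate $\sum_\sigma\chi(\sigma)^{-1}\varphi_E(W_rx^\sigma)$ twice, once by equivariance to get $a_rP_\chi(x)$ and once by reindexing to get $\chi([\mathfrak r])P_\chi(x)$. Your remark that $[\mathfrak r]^2=1$ makes the Artin-map convention harmless is a small clarification that the paper leaves implicit.
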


\begin{proof}
    Since $r\nmid c$, the ideal $\mathfrak r$ is invertible and
$[\mathfrak r]^2=1$. The ramified reciprocity law~\cite[Lemma~5.9]{GonzalezRotger2006} identifies its action on CM points with $W_r$. Write $\sigma_{\mathfrak r}$ for its Artin symbol.
The equivariance of $\varphi_E$ and a change of index give
\[
 a_rP_\chi(x)
 =\sum_\sigma\chi(\sigma)^{-1}
      \varphi_E(W_rx^\sigma)
 =\sum_\sigma\chi(\sigma)^{-1}
      \varphi_E(x^{\sigma\sigma_{\mathfrak r}})
 =\chi([\mathfrak r])P_\chi(x).
\]
Thus, if $\chi([\mathfrak r])\ne a_r$, then $P_\chi(x)=0$.
\end{proof}
This explains the following points: on $\labelE{1466.b1}$,
the orders $D=-20,-24,-52,-72$, and on $\labelE{1594.a1}$,
the orders $D=-20,-72,-88,-148$, have class number two and nontrivial
character value $-1$ at the ramified class above two. Since $a_2=1$,
each contributes four identity-fiber points by~\eqref{eq:localkill} and
Lemma~\ref{lem:ringtorsion}.

\begingroup
\small\centering
\begin{tabular}{@{}llll@{}}\toprule
$E$ & $D$ & $r$ & $(a_r,\chi([\mathfrak r]))$\\\midrule
\labelE{1466.b1} & $-20,\ -24,\ -52,\ -72$ & $2$ & $(1,-1)$\\
\labelE{1594.a1} & $-20,\ -72,\ -88,\ -148$ & $2$ & $(1,-1)$\\\bottomrule
\end{tabular}
\par
\endgroup

\subsection{Extra vanishing of twisted characters}
\label{subsec:twisted-gz}
Some CM points are explained by the following twisted version of the Gross--Zagier formula.
\begin{theorem}[Twisted Gross--Zagier--Zhang]\label{thm:twisted-gz}
Suppose $D=df^2$ and $N$ is of nonsplit type
modulo $d$ with $\gcd(f,N)=1$. Suppose that
\begin{equation}\label{eq:twisted-gz-local-condition}
 \chi([\mathfrak l])=a_\ell
       \qquad(\ell\mid\gcd(N,d)).
\end{equation}
Then $L(E/K,\chi,s)$ has sign $-1$, and there exists $C_{E,D,\chi}\ge0$ such that
\begin{equation}\label{eq:twisted-gz}
 \widehat h_E\bigl(P_\chi(x)\bigr)
       =C_{E,D,\chi}L'(E/K,\chi,1).
\end{equation}
\end{theorem}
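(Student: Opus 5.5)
This is a citation-level result, so I will deduce it from the general Gross--Zagier formula of Yuan--Zhang--Zhang~\cite[Theorem~1.2]{YuanZhangZhang2013} in the explicit elliptic form of~\cite[\S2C]{CaiShuTian2014}. The twisted case is the same specialization as Theorem~\ref{thm:untwisted-gz}, with the trivial character replaced by a ring class character $\chi$. There are three steps: set up the data, verify the sign, and identify the CM cycle.

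\textbf{Setting up the data.} Take $\pi$ to be the automorphic representation of $E$, $K=\Q(\sqrt d)$, and $\chi$ a finite-order character of $\Gal(H_D/K)$. Since $\chi$ is a ring class character, its restriction to $\mathbf A_\Q^\times$ is trivial, so the central-character condition of Yuan--Zhang--Zhang holds. The relevant $L$-function is the Rankin--Selberg $L(E/K,\chi,s)=L(s,\pi_K\otimes\chi)$.

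\textbf{The sign.} By Tunnell--Saito, the global sign is $-1$ exactly when the quaternion algebra determined by the local root numbers $\epsilon_v(\pi_K\otimes\chi)\chi_v\eta_v(-1)$ is ramified at an odd number of places including~$\infty$. So I need the incoherent algebra to be $B$ with $\infty$ added, and I check each place.
\begin{itemize}
\item At $\infty$, the weight-two discrete series together with $K$ imaginary gives local sign $-1$.
\item At primes $\ell\nmid N$, unramified or split behaviour together with $(f,N)=1$ gives local sign $+1$.
\item At $\ell\mid N$ inert in $K$, $\pi_\ell$ is a Steinberg twist and $\chi_\ell$ is unramified, so the local sign is $-1$.
\item At $\ell\mid\gcd(N,d)$, the local sign of $\mathrm{St}\otimes\chi_\ell$ over the ramified quadratic extension is $-a_\ell\,\chi_\ell(\varpi_{K_\ell})$. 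Here $\chi_\ell(\varpi_{K_\ell})=\chi([\mathfrak l])$. Hypothesis~\eqref{eq:twisted-gz-local-condition} makes this equal to $-1$, so $B$ is ramified there.
\end{itemize}
The set of places where the local sign is $-1$ is therefore $\{\ell\mid N\}\cup\{\infty\}$, which has odd cardinality because $N$ is a product of an even number of primes. This gives global sign $-1$, and the nearby coherent algebra is exactly $B$. This local bookkeeping, in particular the sign convention $a_\ell$ versus the classical Atkin--Lehner sign recorded after~\eqref{eq:equivariance}, is the step I expect to be the main obstacle. I would verify it against the untwisted case, $\chi=1$, where it must reproduce the hypothesis $a_\ell=1$ of Theorem~\ref{thm:untwisted-gz}.

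\textbf{Identifying the cycle and the constant.} With $B$ identified, Yuan--Zhang--Zhang gives
\[
\widehat h\bigl(P_\chi(f)\bigr)=c\cdot L'(1,\pi_K\otimes\chi)\,\alpha(f,f)
\]
for test vectors $f$ in the $\pi^B$-isotypic part and a positive constant~$c$. I would choose $f$ to be the newform vector fixed by $\widehat{\OO}_B^\times$. This choice realizes $P_\chi$ as the $\chi$-isotypic sum of $\alpha([x^\sigma]-\xi)$. The Hodge-class correction $\xi$ cancels: for $\chi\ne1$ because $\sum_\sigma\chi(\sigma)^{-1}=0$, and for $\chi=1$ as in Theorem~\ref{thm:untwisted-gz}. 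The base-point term $\tau_0$ in $\varphi_E$ differs from $\xi$ by a constant, and it cancels in the same way.

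The local period $\alpha(f,f)$ is nonnegative, since it is a product of local matrix-coefficient integrals which are positive at good places and computed explicitly in~\cite{CaiShuTian2014} at the ramified ones. The constant $C_{E,D,\chi}$ collects $c$, these local periods, and the Petersson and degree normalizations of~\cite[(2-4)]{CaiShuTian2014}. This yields~\eqref{eq:twisted-gz}. The Néron--Tate height on $E(H_D)\otimes\C$ is extended Hermitian-linearly, which is compatible with the $\chi$-projection.
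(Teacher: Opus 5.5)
Your proposal is correct and takes the same route as the paper, which simply cites the twisted elliptic specialization of \cite[Theorem~1.2]{YuanZhangZhang2013} in the form \cite[\S2C, (2-4)]{CaiShuTian2014}. Your Tunnell--Saito bookkeeping is the verification that citation leaves implicit: the local sign at $\ell\mid\gcd(N,d)$ is $-a_\ell\chi([\mathfrak l])$, so every $\ell\mid N$ and $\infty$ lie in the ramification set and the coherent algebra is $B$. The one imprecision is the trivial character, where the base-point term $h(D)\,\alpha([\tau_0]-\xi)$ is removed not by orthogonality but by its vanishing in $E(K_0)\otimes\Q$ from \S\ref{subsec:normalization}; for the nontrivial $\chi$ where the theorem is actually applied, your cancellation argument is fine.
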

This version is the twisted elliptic specialization of~\cite[Theorem~1.2]{YuanZhangZhang2013}, in the form given in~\cite[\S2C, equation~(2-4)]{CaiShuTian2014}. Thus, the extra vanishing of $L'(E/K,\chi,1)$ implies $P_\chi(x)=0$. The most common case of vanishing occurs for quadratic characters, as in~\cite[\S6]{CaeiroDarmon2025}. For a nontrivial quadratic ring class character $\chi$, genus theory and Artin
formalism give~\cite[\S\S6--7]{Cox2013}
\cite[Chapter~VII]{Neukirch1999}
\begin{equation}\label{eq:genusfactor}
 L(E/K,\chi,s)=L(E^{d_1},s)L(E^{d_2},s).
\end{equation}
If one factor has extra vanishing at the center, then $L'(E/K,\chi,1)$ vanishes as well.

The following examples are explained by Theorem~\ref{thm:twisted-gz}. 
For the positive constituent $d_2$, one has $r_{\operatorname{an}}(E^{d_2})\ge2$, as verified below.
\begin{center}\small
\begin{tabular}{@{}lll@{}}\toprule
$E$ & $D$ & $(d_1,d_2)$\\\midrule
\labelE{1477.a1} & $-36,\ -148$ & $(-3,12),\ (-4,37)$\\
\labelE{1909.a1} & $-75,\ -100,\ -667$ & $(-15,5),\ (-20,5),\ (-23,29)$\\\bottomrule
\end{tabular}
\end{center}
The following table gives integral models $[a_1,a_2,a_3,a_4,a_6]$ of
these twists and non-torsion rational points on them; 
\begin{center}\small
\begin{tabular}{@{}llll@{}}\toprule
$E$ & $d_2$ & $[a_1,a_2,a_3,a_4,a_6]$ & $P$\\\midrule
\labelE{1477.a1} & $12$ & $[0,0,0,-867,12962]$ & $(-23,144)$\\
\labelE{1477.a1} & $37$ & $[1,9,0,-8214,354571]$ & $(26,393)$\\
\labelE{1909.a1} & $5$ & $[0,0,1,-100,281]$ & $(-1,19)$\\
\labelE{1909.a1} & $29$ & $[0,0,1,-3364,54875]$ & $(-13,310)$\\\bottomrule
\end{tabular}
\end{center}
The root-number formula gives
\[
 w(E^{d_2})=w(E)\left(\frac{d_2}{-N_E}\right)=1
\]
in all four cases. Gross--Zagier--Kolyvagin~\cite[Theorem~3.22]{Darmon2004}
therefore excludes analytic rank zero, and the positive root number gives
$r_{\operatorname{an}}(E^{d_2})\ge2$.

The first four orders have class number two and $\gcd(N_E,D)=1$. Their nontrivial characters
satisfy the condition of Theorem~\ref{thm:twisted-gz}, so $P_\chi(x)=0$. Together with the vanishing of the untwisted trace, this shows that these points lie in their corresponding fibers. 

For $D=-667$, one has $\gcd(N_E,D)=23$, and the class group is cyclic of order four, with $\Pic(\mathcal{O}_D)=\Z/4\Z$. The ramified class above $23$ is its element of order two. The quadratic character
takes value $1=a_{23}$, so Theorem~\ref{thm:twisted-gz} applies, with
constituents $(-23,29)$. Its quartic characters take value $-1$ there
and vanish by~\eqref{eq:localkill} as well. Therefore, all character components have
been accounted for, and we conclude that $\CM_{1909}(-667)$ lies in the fiber.

\subsection{Norm-compatibility relations}
Certain CM points of non-maximal orders may vanish due to norm-compatibility relations. Fix $K=\Q(\sqrt d)$. Write
\[
 \OO_f=\Z+f\OO_K,\qquad F_f=H_{df^2},\qquad
 \mathrm{HP}(f)=\varphi_E\bigl(\CM_N(df^2)\bigr)\subset E(F_f).
\]
One has the following norm-compatibility relations (see~\cite[\S6.2, Corollary~6.6]{CornutVatsal2007}).
\begin{proposition}[Norm compatibility]
\label{prop:norm-compatibility}
Assume $\OO_f^\times=\{\pm1\}$, and let
$\ell\nmid Nf$ be prime. For every $P_{f\ell}\in\mathrm{HP}(f\ell)$,
there exists $P_f\in\mathrm{HP}(f)$ such that
\begin{equation}\label{eq:normscalar}
 \operatorname{Tr}_{F_{f\ell}/F_f}(P_{f\ell})
 =
 \begin{cases}
 [a_\ell]P_f,
   & \ell \text{ inert in }K,\\
 [a_\ell]P_f-P_f^{\sigma_\lambda}
              -P_f^{\sigma_\lambda^{-1}},
   &  \ell=\lambda\bar\lambda
       \text{ split},\\
 [a_\ell]P_f-P_f^{\sigma_\lambda},
   &  \ell=\lambda^2
       \text{ ramified}.
 \end{cases}
\end{equation}
Here $\sigma_\lambda\in\Gal(F_f/K)$ is the Artin symbol of
$\lambda$.
\end{proposition}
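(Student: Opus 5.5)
This is the classical Euler-system norm relation for Heegner points, and I would derive it from the Hecke operator $T_\ell$ on $X^N$. The steps are: compute $\operatorname{Tr}_{F_{f\ell}/F_f}$ of a CM point of conductor $f\ell$ as a divisor, compare that divisor with $T_\ell$ applied to a conductor-$f$ point, and then push forward along $\varphi_E$.

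\emph{Step 1: the divisor $T_\ell(y)$.} Since $\ell\nmid N$, the algebra $B$ is split at $\ell$. Hence $T_\ell$ is given by the $\ell+1$ sublattices of index $\ell$, that is, by the $\ell+1$ neighbours in the Bruhat--Tits tree at $\ell$. Fix $y\in\CM_N(df^2)$, attached to an optimal embedding $\OO_f\hookrightarrow\OO_B$. I would classify the $\ell+1$ neighbours by which order of $K$ acts optimally on them.
\begin{itemize}
\item If $\ell$ is inert, none of the neighbours is preserved by $\OO_f\otimes\Z_\ell$, so all $\ell+1$ of them have conductor $f\ell$.
\item If $\ell=\lambda\bar\lambda$ splits, exactly two neighbours have conductor $f$; they are $y^{\sigma_\lambda}$ and $y^{\sigma_\lambda^{-1}}$. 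The remaining $\ell-1$ have conductor $f\ell$.
\item If $\ell$ is ramified, exactly one neighbour has conductor $f$, namely $y^{\sigma_\lambda}$. The remaining $\ell$ have conductor $f\ell$.
\end{itemize}

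\emph{Step 2: identifying the conductor-$f\ell$ neighbours as a Galois orbit.} The hypothesis $\OO_f^\times=\{\pm1\}$ gives
\[
[F_{f\ell}:F_f]=\#\ker\bigl(\Pic(\OO_{f\ell})\to\Pic(\OO_f)\bigr)=\ell-\left(\tfrac{d}{\ell}\right).
\]
In each of the three cases this equals the number of conductor-$f\ell$ neighbours found in Step 1. By Shimura reciprocity, $\Gal(F_{f\ell}/F_f)$ permutes these neighbours, and it does so simply transitively, since the Galois action on $\CM_N(df^2\ell^2)$ is free. Consequently, for any conductor-$f\ell$ point $x$ lying over $y$,
\[
\operatorname{Tr}_{F_{f\ell}/F_f}(x)=T_\ell(y)-(\text{the conductor-}f\text{ neighbours}).
\]

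\emph{Step 3: every $x$ lies over some $y$.} Given $P_{f\ell}=\varphi_E(x)$, I need to produce $y\in\CM_N(df^2)$ with $x$ among the neighbours of $y$. I would take $y$ to be the unique neighbour of $x$ whose stabilizer is the order $\OO_f$; this is the local statement that $\OO_{f\ell}\otimes\Z_\ell$ has a unique overorder of index $\ell$. The Atkin--Lehner components must be compatible, but this is automatic because $\ell\nmid N$ and the $W_N(D)$-actions at the two levels correspond.

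\emph{Step 4: pushing forward.} I apply $\varphi_E$ to the divisor identity of Step 2. The normalization~\eqref{eq:normalized} is $\varphi_E(\tau)=\alpha([\tau]-[\tau_0])$, so $\varphi_E$ applied to a degree-zero divisor equals $\alpha$ of that divisor. The divisor $T_\ell(y)-(\ell+1)[\tau_0]$ has degree zero, and I decompose it as
\[
T_\ell\bigl([y]-[\tau_0]\bigr)+\bigl(T_\ell[\tau_0]-(\ell+1)[\tau_0]\bigr).
\]
Because $\alpha$ is Hecke-equivariant, $\alpha T_\ell=a_\ell\alpha$, so the first term maps to $a_\ell P_f$ with $P_f=\varphi_E(y)$. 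The second term is the constant $\alpha\bigl(T_\ell[\tau_0]-(\ell+1)[\tau_0]\bigr)$. The same translation constant appears on the other side, with coefficient equal to the number of points counted, which is $[F_{f\ell}:F_f]$ plus the number of conductor-$f$ neighbours, i.e.\ $\ell+1$. It therefore cancels, leaving precisely~\eqref{eq:normscalar}. Alternatively, this constant vanishes because $\alpha(T_\ell[\tau_0]-(\ell+1)[\tau_0])=(a_\ell-\ell-1)\varphi_E(\tau_0)$ up to a rational torsion term, and that term is zero under the present normalization.

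\emph{Expected main obstacle.} The hard part is this last bookkeeping. The trace is taken of single points rather than of degree-zero divisors, so one must make sure that the base-point terms $[\tau_0]$, and the Hodge class $\xi$ if one starts from $\alpha$ itself, cancel exactly. It is also important to identify the conductor-$f$ neighbours as $y^{\sigma_\lambda^{\pm1}}$ with the correct orientation. For both points I would simply follow the adelic description in \cite[\S6.2]{CornutVatsal2007}, specialized to level one.
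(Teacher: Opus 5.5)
Your route is the standard Euler-system derivation: classify the $\ell+1$ Bruhat--Tits neighbours, identify the conductor-$f\ell$ ones as one free $\Gal(F_{f\ell}/F_f)$-orbit by the count $\ell-\left(\frac{d}{\ell}\right)$, then push forward using $\alpha\circ T_\ell=[a_\ell]\circ\alpha$. This is what the cited source does; the paper gives no argument beyond the citation to Cornut--Vatsal. Steps 1--3 are fine. In particular, the divisor identity in Step 2 follows cleanly from freeness plus the degree count. The orientation of $\sigma_\lambda$ is harmless, since both $\sigma_\lambda^{\pm1}$ occur in the split case and $\sigma_\lambda^2=1$ in the ramified case.

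The gap is in Step 4. Your first justification, that $c=\alpha\bigl(T_\ell[\tau_0]-(\ell+1)[\tau_0]\bigr)$ cancels because $\ell+1$ points are counted on the other side, is not an argument. The count $\ell+1$ was already used to make $T_\ell[y]-(\ell+1)[\tau_0]$ a degree-zero divisor. After splitting off $T_\ell([y]-[\tau_0])$, the term $c$ sits on one side only, and nothing formal removes it. Indeed, $T_\ell\xi=(\ell+1)\xi$ in $\Pic(X^N)\otimes\Q$, so $c=(a_\ell-\ell-1)\,\alpha([\tau_0]-\xi)$ in $E\otimes\Q$, and $a_\ell\neq\ell+1$ by Hasse. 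Hence $c$ is non-torsion for a general base point, and its vanishing must be a special property of $\tau_0$.

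Your alternative points the right way but is empty as written. Since $\varphi_E(\tau_0)=O$ by definition, saying $c$ equals $(a_\ell-\ell-1)\varphi_E(\tau_0)$ up to torsion just asserts that $c$ is torsion. The missing input is the Gross--Zagier vanishing $\alpha([\tau_0]-\xi)=0$ in $E(K_0)\otimes\Q$ from Theorem~\ref{thm:untwisted-gz}, the same fact used to construct $\varphi_E$. This makes $c$ a torsion point of $E(K_0)$; it need not be rational, because $T_\ell[\tau_0]$ is only $K_0$-rational. Then $c=O$ by Lemma~\ref{lem:biquadratic-torsion-free}.

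With that substitution your proof is complete. It also makes explicit something the paper's bare citation leaves implicit: exact equality in $E(F_f)$ depends on the Gross--Zagier normalization of $\varphi_E$, not just on Hecke equivariance.
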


If $F_{f\ell}=F_f$, the left-hand side of
\eqref{eq:normscalar} is the single point $P_{f\ell}$.
Consequently, vanishing of the right-hand side places the
corresponding CM point in $\varphi_E^{-1}(O)$.
The same conclusion holds if the right-hand side is torsion
and $E(F_f)_{\tors}=0$.

For $d=-15$, one has $F_2=F_1$, since
$h(-60)=h(-15)=2$. The prime above $2$ represents the
nontrivial ideal class, so its Artin symbol $\sigma$ generates
$\Gal(F_1/K)$ of order two. For any $P_2\in\mathrm{HP}(2)$,
choose $P_1$ as in Proposition~\ref{prop:norm-compatibility}.
The untwisted Gross--Zagier vanishing and
Lemma~\ref{lem:ringtorsion} give
\[
 P_1+P_1^\sigma=O.
\]
Since $2$ splits in $K$, equation~\eqref{eq:normscalar} becomes
\[
 P_2=[a_2]P_1-2P_1^\sigma=[a_2+2]P_1.
\]
On $\labelE{707.a1}$, one has $a_2=-2$. Hence every
CM point of discriminant $-60$ lies in the identity fiber,
giving all eight such points.

We have therefore accounted for every point listed in
Table~\ref{tab:complete-fibers} and completed the proof of
Theorem~\ref{thm:fibers}.

\section{A finiteness conjecture for CM fibers}\label{sec:further}
We conclude the paper by proposing the following conjecture. 

\begin{conjecture}
There are only finitely many elliptic curves $E/\Q$ satisfying
Assumption \ref{ass:working} whose identity fibers $\varphi_E^{-1}(O)$
consist entirely of CM points.
\end{conjecture}

This conjecture is supported by the following estimates: Let $E$ satisfy Assumption \ref{ass:working}. Watkins's estimate \cite[Lemma~2.1 and Theorem~5.1]{Watkins} and the Ribet--Takahashi comparison
\cite[\S6.2]{Pasten2024} give, for sufficiently large $N$,
\[
  \delta_E
  =\frac{\delta_{0,E}}{t_pt_q}
  \ge \frac{N|\Delta_E|^{1/6}}{5350\,t_pt_q\log N}
  \ge \frac{N^{7/6}}{85600\log N}.
\]
Here we use $|\Delta_E|=p^{t_p}q^{t_q}$ and
$\ell^{(t-1)/6}/t>1/4$ for a prime $\ell\ge2$ and $t\ge1$.

The familiar CM contributions grow much more slowly. Let $S_N$ be the union of the
class-number-one points and all Atkin--Lehner fixed points. The contribution of the class-number-one points is finite. Moreover, since $h(D)\ll\sqrt{|D|}\log|D|$, the contribution of all fixed points \eqref{eq:fixed} is of the size 

\[\sum_{\substack{m\mid N\\m>1}}\#\operatorname{Fix}(W_m)\ll \sqrt N\log N.\]

Ramification cannot compensate for this shortage asymptotically.
The genus formula gives $g(X^N)=O(N)$ \cite{Voight2021}, while
Riemann--Hurwitz 
\[
\sum_{x\in X^N}(e_x-1)=2g(X^N)-2,
\]
gives that the ramification is of size $2g-2=O(N)$.
Consequently, an entirely CM fiber
would require at least $\delta_E-O(N)=(1-o(1))\delta_E$ additional
CM points outside the $S_N$.

In practice, these additional points appear as comparatively small
contributions arising from special local relations or extra vanishing
of twisted $L$-functions. We expect them to remain too sparse to supply
almost the entire fiber as $N$ grows. This expectation, together with the
degree comparison above, motivates the above conjecture.

\section*{Acknowledgements}
The author would like to thank Michael Lipnowski for suggesting the research problem. The author appreciates Elias Caeiro and Henri Darmon for valuable discussions and comments on this work.

\section*{Disclosure of AI usage}
GPT-6 was used to revise and proofread the manuscript and to help discover Lemmas~\ref{lem:biquadratic-torsion-free} and~\ref{lem:ringtorsion} and the exceptional case of Corollary~\ref{cor:fixed-zero}, which help simplify the arguments but are not essential. The main idea of the paper and its main results, Theorem~\ref{thm:fibers} and Corollary~\ref{cor:classone}, are the author's own work.

\providecommand{\bysame}{\leavevmode\hbox to3em{\hrulefill}\thinspace}
\providecommand{\MR}{\relax\ifhmode\unskip\space\fi MR }
\providecommand{\MRhref}[2]{%
  \href{http://www.ams.org/mathscinet-getitem?mr=#1}{#2}
}
\providecommand{\href}[2]{#2}

\begin{center}
  \begin{tabular}{c}
    \textbf{Jingxuan Geng} \\
    Department of Mathematics, The Ohio State University \\
    \texttt{geng.210@osu.edu}
  \end{tabular}
\end{center}

\begin{thebibliography}{99}

\bibitem{Baker1966}
Alan Baker, \emph{Linear forms in the logarithms of algebraic numbers},
Mathematika \textbf{13} (1966), no.~2, 204--216.
\newblock \href{https://doi.org/10.1112/S0025579300003971}{doi:\nolinkurl{10.1112/S0025579300003971}}.

\bibitem{Baker1969}
\bysame, \emph{A remark on the class number of quadratic fields},
Bulletin of the London Mathematical Society \textbf{1} (1969), no.~1, 98--102.
\newblock \href{https://doi.org/10.1112/blms/1.1.98}{doi:\nolinkurl{10.1112/blms/1.1.98}}.

\bibitem{BDMTV2023}
Jennifer~S. Balakrishnan, Netan Dogra, J.~Steffen M{\"u}ller, Jan Tuitman,
and Jan Vonk, \emph{Quadratic {Chabauty} for modular curves: algorithms
and examples}, Compositio Mathematica \textbf{159} (2023), no.~6, 1111--1152.
\newblock \href{https://doi.org/10.1112/S0010437X23007170}{doi:\nolinkurl{10.1112/S0010437X23007170}}.

\bibitem{Banks2026}
William~D. Banks, \emph{On the {OpenAI} proof that {Landau--Siegel}
zeros do not exist}, preprint, 2026.
\newblock \href{https://arxiv.org/abs/2610.15483}{arXiv:\nolinkurl{2610.15483}}.

\bibitem{BertoliniDarmon1996}
Massimo Bertolini and Henri Darmon, \emph{Heegner points on {Mumford--Tate}
curves}, Inventiones Mathematicae \textbf{126} (1996), no.~3, 413--456.
\newblock \href{https://doi.org/10.1007/s002220050105}{doi:\nolinkurl{10.1007/s002220050105}}.

\bibitem{BuhlerGrossZagier1985}
Joe~P. Buhler, Benedict~H. Gross, and Don~B. Zagier,
\emph{On the conjecture of {Birch} and {Swinnerton-Dyer} for an elliptic
curve of rank~3}, Mathematics of Computation \textbf{44} (1985),
no.~170, 473--481.
\newblock \href{https://doi.org/10.1090/S0025-5718-1985-0777279-X}{doi:\nolinkurl{10.1090/S0025-5718-1985-0777279-X}}.

\bibitem{CaeiroDarmon2025}
Elias Caeiro and Henri Darmon, \emph{The {Heegner--Stark} theorem and
{Stark--Heegner} points}, Essential Number Theory \textbf{4} (2025),
no.~1, 67--99.
\newblock \href{https://doi.org/10.2140/ent.2025.4.67}{doi:\nolinkurl{10.2140/ent.2025.4.67}}.

\bibitem{CaeiroDarmonUnpublished}
Elias Caeiro and Henri Darmon, \emph{{Stark--Heegner} points and real
quadratic fields of {Richaud--Degert} type}, unpublished manuscript.

\bibitem{CaeiroGengUnpublished}
Elias Caeiro and Jingxuan Geng, \emph{{Gauss} class number problems via
{Gross--Zagier} and $p$-adic uniformization of {Shimura} curves},
unpublished manuscript.

\bibitem{CaiShuTian2014}
Li Cai, Jie Shu, and Ye Tian, \emph{Explicit {Gross--Zagier} and
{Waldspurger} formulae}, Algebra \& Number Theory \textbf{8} (2014),
no.~10, 2523--2572.
\newblock \href{https://doi.org/10.2140/ant.2014.8.2523}{doi:\nolinkurl{10.2140/ant.2014.8.2523}}.

\bibitem{CornutVatsal2007}
Christophe Cornut and Vinayak Vatsal, \emph{Nontriviality of
{Rankin--Selberg} $L$-functions and {CM} points},
in \emph{$L$-Functions and Galois Representations},
D.~Burns, K.~Buzzard, and J.~Nekov{\'a}{\v r} (eds.),
London Mathematical Society Lecture Note Series, vol.~320,
Cambridge University Press, Cambridge, 2007, pp.~121--186.
\newblock \href{https://doi.org/10.1017/CBO9780511721267.005}{doi:\nolinkurl{10.1017/CBO9780511721267.005}}.

\bibitem{Cox2013}
David~A. Cox, \emph{Primes of the form $x^2+ny^2$: {Fermat}, class field
theory, and complex multiplication}, second ed.,
John Wiley \& Sons, Hoboken, NJ, 2013.
\newblock \href{https://doi.org/10.1002/9781118400722}{doi:\nolinkurl{10.1002/9781118400722}}.

\bibitem{Darmon2004}
Henri Darmon, \emph{Rational points on modular elliptic curves},
CBMS Regional Conference Series in Mathematics, vol.~101,
American Mathematical Society, Providence, RI, 2004.

\bibitem{Goldfeld1976}
Dorian~M. Goldfeld, \emph{The class number of quadratic fields and the
conjectures of {Birch} and {Swinnerton-Dyer}},
Annali della Scuola Normale Superiore di Pisa, Classe di Scienze
\textbf{3} (1976), no.~4, 623--663.

\bibitem{GonzalezRotger2006}
Josep Gonz{\'a}lez and Victor Rotger,
\emph{Non-elliptic {Shimura} curves of genus one},
Journal of the Mathematical Society of Japan \textbf{58} (2006),
no.~4, 927--948.
\newblock \href{https://doi.org/10.2969/jmsj/1179759530}{doi:\nolinkurl{10.2969/jmsj/1179759530}}.

\bibitem{Gross1984}
Benedict~H. Gross, \emph{Heegner points on $X_0(N)$},
in \emph{Modular Forms} (Durham, 1983), R.~A. Rankin (ed.),
Ellis Horwood, Chichester, 1984, pp.~87--105.

\bibitem{GrossZagier1986}
Benedict~H. Gross and Don~B. Zagier,
\emph{Heegner points and derivatives of $L$-series},
Inventiones Mathematicae \textbf{84} (1986), no.~2, 225--320.
\newblock \href{https://doi.org/10.1007/BF01388809}{doi:\nolinkurl{10.1007/BF01388809}}.

\bibitem{Heegner1952}
Kurt Heegner, \emph{Diophantische Analysis und Modulfunktionen},
Mathematische Zeitschrift \textbf{56} (1952), no.~3, 227--253.
\newblock \href{https://doi.org/10.1007/BF01174749}{doi:\nolinkurl{10.1007/BF01174749}}.

\bibitem{Neukirch1999}
J{\"u}rgen Neukirch, \emph{Algebraic number theory},
Grundlehren der mathematischen Wissenschaften, vol.~322,
Springer-Verlag, Berlin, 1999, translated by Norbert Schappacher.
\newblock \href{https://doi.org/10.1007/978-3-662-03983-0}{doi:\nolinkurl{10.1007/978-3-662-03983-0}}.

\bibitem{Oesterle1985}
Joseph Oesterl{\'e}, \emph{Nombres de classes des corps quadratiques
imaginaires}, S{\'e}minaire Bourbaki, 1983/84,
Ast{\'e}risque, vol.~121--122,
Soci{\'e}t{\'e} Math{\'e}matique de France, 1985,
Expos{\'e} no.~631, pp.~309--323.

\bibitem{Ogg1983}
Andrew~P. Ogg, \emph{Real points on {Shimura} curves},
in \emph{Arithmetic and Geometry}, Vol.~I,
Michael Artin and John Tate (eds.),
Progress in Mathematics, vol.~35,
Birkh{\"a}user, Boston, 1983, pp.~277--307.
\newblock \href{https://doi.org/10.1007/978-1-4757-9284-3_12}{doi:\nolinkurl{10.1007/978-1-4757-9284-3_12}}.

\bibitem{OpenAI2026}
{OpenAI}, \emph{Uniform exclusion of {Landau--Siegel} zeros},
OpenAI Math Release preprint, 1 October 2026.
\newblock \url{https://github.com/openai/math/blob/main/preprints/Uniform-exclusion-of-Landau-Siegel-zeros-October-1-2026/paper.pdf}.

\bibitem{Pasten2024}
Hector Pasten, \emph{{Shimura} curves and the $abc$ conjecture},
Journal of Number Theory \textbf{254} (2024), 214--335,
with an appendix by Robert Lemke Oliver and Jesse Thorner.
\newblock \href{https://doi.org/10.1016/j.jnt.2023.07.002}{doi:\nolinkurl{10.1016/j.jnt.2023.07.002}}.

\bibitem{RibetStein2001}
Kenneth~A. Ribet and William~A. Stein,
\emph{Lectures on {Serre}'s conjectures},
in \emph{Arithmetic Algebraic Geometry} (Park City, UT, 1999),
IAS/Park City Mathematics Series, vol.~9,
American Mathematical Society, Providence, RI, 2001, pp.~143--232.

\bibitem{RibetTakahashi1997}
Kenneth~A. Ribet and Shuzo Takahashi,
\emph{Parametrizations of elliptic curves by {Shimura} curves and by
classical modular curves},
Proceedings of the National Academy of Sciences of the United States
of America \textbf{94} (1997), no.~21, 11110--11114.
\newblock \href{https://doi.org/10.1073/pnas.94.21.11110}{doi:\nolinkurl{10.1073/pnas.94.21.11110}}.

\bibitem{SchoofTzanakis2012}
Ren{\'e} Schoof and Nikos Tzanakis,
\emph{Integral points of a modular curve of level~11},
Acta Arithmetica \textbf{152} (2012), no.~1, 39--49.
\newblock \href{https://doi.org/10.4064/aa152-1-4}{doi:\nolinkurl{10.4064/aa152-1-4}}.

\bibitem{Serre1997}
Jean-Pierre Serre, \emph{Lectures on the {Mordell--Weil} theorem},
third ed., Aspects of Mathematics, vol.~E15,
Friedr. Vieweg \& Sohn, Braunschweig, 1997,
translated and edited by Martin Brown.
\newblock \href{https://doi.org/10.1007/978-3-663-10632-6}{doi:\nolinkurl{10.1007/978-3-663-10632-6}}.

\bibitem{Stark1967}
Harold~M. Stark, \emph{A complete determination of the complex quadratic
fields of class-number one}, Michigan Mathematical Journal
\textbf{14} (1967), no.~1, 1--27.
\newblock \href{https://doi.org/10.1307/mmj/1028999653}{doi:\nolinkurl{10.1307/mmj/1028999653}}.

\bibitem{Stark1969}
\bysame, \emph{On the ``gap'' in a theorem of {Heegner}},
Journal of Number Theory \textbf{1} (1969), no.~1, 16--27.
\newblock \href{https://doi.org/10.1016/0022-314X(69)90023-7}{doi:\nolinkurl{10.1016/0022-314X(69)90023-7}}.

\bibitem{Takahashi2001}
Shuzo Takahashi, \emph{Degrees of parametrizations of elliptic curves
by {Shimura} curves}, Journal of Number Theory \textbf{90} (2001),
no.~1, 74--88.
\newblock \href{https://doi.org/10.1006/jnth.2000.2614}{doi:\nolinkurl{10.1006/jnth.2000.2614}}.

\bibitem{LMFDB}
{The LMFDB Collaboration},
\emph{The $L$-functions and modular forms database}, 2026,
Elliptic curves over the rationals; accessed 2 October 2026.
\newblock \url{https://www.lmfdb.org/EllipticCurve/Q/}.

\bibitem{Tu2014}
Fang-Ting Tu, \emph{Schwarzian differential equations associated to
{Shimura} curves of genus zero}, Pacific Journal of Mathematics
\textbf{269} (2014), no.~2, 453--489.
\newblock \href{https://doi.org/10.2140/pjm.2014.269.453}{doi:\nolinkurl{10.2140/pjm.2014.269.453}}.

\bibitem{Voight2021}
John Voight, \emph{Quaternion algebras},
Graduate Texts in Mathematics, vol.~288, Springer, Cham, 2021.
\newblock \href{https://doi.org/10.1007/978-3-030-56694-4}{doi:\nolinkurl{10.1007/978-3-030-56694-4}}.

\bibitem{Watkins}
Mark Watkins,
\emph{Explicit lower bounds on the modular degree of an elliptic curve},
revised author version of the 2004 preprint.
\newblock \href{https://arxiv.org/abs/math/0408126}{arXiv:\nolinkurl{math/0408126}}.

\bibitem{YuanZhangZhang2013}
Xinyi Yuan, Shou-Wu Zhang, and Wei Zhang,
\emph{The {Gross--Zagier} formula on {Shimura} curves},
Annals of Mathematics Studies, vol.~184,
Princeton University Press, Princeton, NJ, 2013.
\newblock \href{https://doi.org/10.1515/9781400845644}{doi:\nolinkurl{10.1515/9781400845644}}.

\end{thebibliography}
\end{document}